\newcommand{\gra}[1]{\raisebox{-.4cm}{\includegraphics[height=1cm]{UP#1.pdf}}}
\newcommand{\grb}[1]{\raisebox{-.8cm}{\includegraphics[height=2cm]{UP#1.pdf}}}
\newcommand{\gre}[1]{\raisebox{-2.3cm}{\includegraphics[height=5cm]{UP#1.pdf}}}
\theoremstyle{plain} 
\newtheorem{theo}{\indent\sc Theorem}[section]
\newtheorem{cor}[theo]{\indent\sc Corollary}
\newtheorem{prop}[theo]{\indent\sc Proposition}
\theoremstyle{definition} 
\newtheorem{defi}[theo]{\indent\sc Definition}
\newcommand{\C}{\mathbb C}
\newcommand{\ca}{\mathcal{C}}
\newcommand{\AC}{\mathcal{A}}
\newcommand{\Irr}{\text{Irr}(\mathcal{C})}
\begin{document}

\title[Quantum $G_{2}$ categories have property (T)]{Quantum $G_{2}$ categories have property (T)}
\author[Corey Jones]{\rm Corey Jones }

\keywords{ 
$C^{*}$ tensor category, Approximation/Rigidity Properties, Planar Algebras} 

\address[Corey Jones]{
Vanderbilt University\endgraf
Department of Mathematics\endgraf
Nashville\endgraf
USA
}
\email{corey.m.jones@vanderbilt.edu}
\thanks{The author was partially supported by NSF grant  DMS-1362138}
\maketitle

\begin{abstract} We show that the rigid C*-tensor categories of finite dimensional type 1 unitary representations of the quantum groups $U_{q}(\mathfrak{g}_{2})$ corresponding to the exceptional Lie group $G_2$ for positive $q\ne 1$ have property (T). 
\end{abstract}

\begin{section}{Introduction}

 Recently there has been a great deal of interest in approximation and rigidity properties for rigid $C^{*}$-tensor categories and subfactors.  Rigid $C^{*}$-tensor categories, introduced in their modern form by Longo and Roberts \cite{LR}, are structures which provide a unifying framework for  symmetries appearing in a variety of contexts.   They make a prominent appearance in the theory of compact quantum groups as representation categories \cite{NT}, and arise as DHR super-selection sectors in algebraic quantum field theory \cite{H}, hence are often described as encoding ``quantum symmetries'', generalizing the role of groups and their representations. 
 
 Rigid $C^{*}$-tensor categories are also realized as categories of bimodules appearing in the standard invariants of finite index subfactors \cite{Jo1}.  The standard invariant of the subfactor $N\subseteq M$ is the $2$-category of  bimodules that appear as tensor powers of $_{N} L^{2}(M)_{M}$ and its dual with respect to the relative tensor product. This yields two tensor categories, the $M$-$M$ bimodules and the $N$-$N$ bimodules, related by $_{N} L^{2}(M)_{M}$ and its dual.  The standard invariant is a powerful invariant of a subfactor, and was first axiomatized in full generality by Popa as standard $\lambda$-lattices \cite{Po2}.  Another useful realization is Jones' subfactors planar algebras \cite{Jo2}.  Alternatively, standard invariants can be axiomatized as rigid $C^{*}$-tensor categories along with a tensor generating $Q$-systems (see \cite{Mu1} for details), or via Ocneanu's paragroups in the finite depth case \cite{O0}.

 Popa introduced the concepts of approximation and rigidity properties for subfactors (see \cite{Po0}, \cite{Po1}, \cite{Po3}, \cite{Po4}). Popa's definitions can be formulated in terms of the symmetric enveloping inclusion  $T\subseteq S$ associated to $N\subseteq M$ (see \cite{Po0}, \cite{Po4}), and he showed that the definitions only depend on the standard invariant of the subfactor.  Recently, approximation and rigidity properties were translated from Popa's original definitions into the categorical setting by Popa and Vaes in \cite{PV}.  This allows for the definitions of property (T), the Haagerup property, and amenability to be defined in a conceptually uniform way for standard invariants and rigid $C^{*}$-tensor categories without reference to an ambient subfactor. They introduce a representation theory for standard invariants  and rigid $C^{*}$-tensor categories generalizing the unitary representation theory of groups, encoding in a natural way analytical properties.  To this end,  Popa and Vaes identify a class of \textit{admissible representations} of the fusion algebra of a category.  This class admits a universal representation, allowing for the construction of a univsersal algebra $C^{*}(\ca)$, generalizing the universal $C^{*}$-algebra for groups.

Soon after the initial work of Popa and Vaes, admissible representations of the fusion algebra were interpreted from different points of view.  Neshveyev and Yamashita showed that admissible representations can be understood as objects in the Drinfeld center of the ind-category \cite{NT}.  In \cite{GJ}, the authors show that admissible representations have a natural interpretation in the annular representation theory for planar algebras of Jones \cite{Jo3}, \cite{Jo4} and the representation theory of the tube algebra of a category, introduced by Ocneanu in \cite{O}.  The fusion algebra is a corner of the tube algebra, and in \cite{GJ} it is shown that admissible representations of the fusion algebra are precisely representations which are restrictions of representations of the whole tube algebra.  Since the tube algebra is computable in principle, this provides a method for determining admissible representations. 

While the notion of property (T) for subfactor standard invariants (due to Popa) has been in existence for many years, examples have been somewhat elusive.  Until recently, the only known examples of subfactor standard invariants with property (T) came in some way from discrete (T) groups.  In particular, the diagonal subfactors and the Bisch-Haagerup subfactors, when constructed with property (T) groups, produce property (T) standard invariants (see Popa \cite{Po3}, \cite{Po4} and Bisch-Popa, \cite{BiPo} respectively).  Arano showed in \cite{Ar} that the discrete dual of the compact quantum groups $SU_{q}(N)$ have central property (T) for $N\ge 3$ odd and positive $q\ne1$, which Popa and Vaes showed is equivalent to the corresponding representation category of $SU_{q}(N)$ having property (T) (see \cite{PV}).  From these categories, one can construct subfactors whose even bimodule categories are equivalent to $Rep(SU_{q}(N))$, providing the first examples of subfactors not coming in some way from discrete groups whose standard invariants have (T).  In light of this breakthrough, it is natural to wonder if other quantum group categories may have property (T).

The categories $\mathscr{C}_{q}(\mathfrak{g}_{2})$ for positive $q$ are the rigid $C^{*}$-tensor categories of finite dimensional type 1 unitary representations of the Drinfeld-Jimbo quantum groups, corresponding to the exceptional Lie group $G_2$ (see \cite{NT}).  They have been described diagrammatically by Kuperberg \cite{K}, \cite{K2}.  These diagrammatic categories, denoted $(G_{2})_{q}$, have been further studied by Morrsion, Peters, and Snyder in \cite{MPS}, where they appear as ``small'' examples of trivalent categories in their classification program, and a description of small idempotents is obtained.

In this note, we show that the categories $\mathscr{C}_{q}(\mathfrak{g}_{2})$ have property (T) for positive $q\ne 1$ using their diagrammtic descriptions.  This provides a  new class of examples of subfactors with property (T) standard invariant. We give a brief outline of the argument:

  Since the fusion algebra of $(G_{2})_{q}$ is abelian (the category is braided), the universal $C^{*}$-algebra is isomorphic to the algebra continuous functions on its spectrum, which corresponds to the space of irreducible admissible representations.  Having property $(T)$ in this setting simply translates to the trivial representation being isolated in the spectrum.  Identifying the fusion algebra $(G_{2})_{q}$ as polynomials in two self adjoint variables, the possible irreducible representations correspond to points in the plane, defined by evaluation of polynomials.  Using some general restrictions, we reduce the possibilities for admissible representations to a rectangle, with the trivial representation at a corner.  Applying the description of small minimal idempotents provided by Morrison, Peters, and Snyder \cite{MPS}, we define a function of the plane $f(\alpha, t)$ which is $0$ at the trivial representation, and has the property that this function must be non-negative at $(\alpha, t)$ for the representation corresponding to that point to be admissible.  Then using elementary calculus, we show in a neighborhood of the trivial representation this function is strictly negative, implying that $(G_{2})_{q}$ has property (T).  We note our calculus arguments break down precisely when $q=1$, which is to be expected since the classical $G_{2}$ representation category is amenable.

 Although the categories we study are quantum group category, we emphasis our proof mostly uses only the basic skein theoretic description of the category.  We also remark that a surprisingly small amount of data from the actual category is used.  We hope this note demonstrates the computational usefulness of diagrammatics for studying analytical properties of categories, which was also demonstrated in \cite{Bro}, where the authors give a direct planar algebraic proof that the categories $TLJ(\delta)$ have the Haagerup property for all  $\delta\ge 2$ using Popa's original definitions. 

The outline of the paper is as follows: We begin by describing rigid $C^{*}$-tensor categories and the tube algebra.  We then discuss admissible representations and property (T).  Finally we give a short proof that $(G_{2})_{q}$ has property (T).  In the appendix, we discuss in greater detail the quantum group $U_{q}(\mathfrak{g}_{2})$ and describe how the $*$-structure from $\mathscr{C}_{q}(\mathfrak{g}_2)$ transports to the planar algebra $(G_{2})_{q}$.  

\medskip

\textbf{Acknowledgements:}  The author would like to thank Noah Snyder and Yunxiang Ren for useful discussions on $(G_{2})_{q}$, and Vaughan Jones for his encouragement. He also thanks Marcel Bischoff, Arnaud Brothier, and Michael Northington for helpful comments, and Shamindra Ghosh for many interesting discussions.  The author was supported by NSF grant  DMS-1362138.

\end{section}

\begin{section}{Preliminaries}  
\begin{subsection}{Rigid C*-Tensor Categories}

In this paper we will be concerned with semi-simple, $C^{*}$-categories with strict tensor functor, simple unit and duals.  We also assume that $\ca$ has countably many isomorphism classes of simple objects.  Often in the literature, this is the definition of a rigid $C^{*}$-tensor category.  We briefly elaborate on the meaning of each of these words.

\ \ A $C^{*}$-category is a $\C$-linear category $\ca$, with each morphism space $Mor(X,Y)$ a Banach space, and a conjugate linear, involutive, contravariant functor $*:\mathcal{C}\rightarrow \ca$ which fixes objects and satisfies for every morphism $f$, $||f^{*}f||=||ff^{*}||=||f||^{2}$.  We say the category is \textit{semi-simple} if the category has direct sums, sub-objects, and each $Mor(X,Y)$ is finite dimensional.

A strict tensor functor is a bi-linear functor $\otimes: \ca \times \ca\rightarrow \ca$, which is associative and has a distinguished unit $id\in Obj(\ca)$ such that $X\otimes id=X=id \otimes X$. The category is \textit{rigid} if for each $X\in Obj(\ca)$, there exists $\overline{X}\in Obj(\ca)$ and morphism $R\in Mor(id, \overline{X}\otimes X)$ and $\overline{R}\in Mor(id, X\otimes \overline{X})$ satisfying the so-called conjugate equations:

$$(1_{\overline{X}}\otimes \overline{R}^{*})(R\otimes 1_{\overline{X}})=1_{\overline{X}}\ \text{and}\ (1_{X}\otimes R^{*})(\overline{R}\otimes 1_{X})=1_{X}$$

\ \ We say two objects are $X,Y$ are \textit{isomorphic} if there exists $f\in Mor(X,Y)$ such that $f^{*}f=1_{X}$ and $f f^{*}=1_{Y}$.  We call an object $X$ \textit{simple} if $Mor(X,X)\cong \mathbb{C}$.  We note that for any simple objects $X$ and $Y$, $Mor(X,Y)$ is either isomorphic to $\mathbb{C}$ or $0$. Two simple objects are isomorphic if and only if $Mor(X,Y)\cong \mathbb{C}$. Isomorphism defines an equivalence relation on the collection of all objects and we denote the equivalence class of an object by $[X]$, and the set of isomorphism classes of simple objects $\text{Irr}(\mathcal{C})$.

\ \ The semi-simplicity axiom implies that for any object $X$, $Mor(X,X)$ is a finite dimensional  $C^{*}$-algebra over $\mathbb{C}$, hence a multi-matrix algebra.  It is easy to see that each summand of the matrix algebra corresponds to an equivalence class of simple objects, and the dimension of the matrix algebra corresponding to a simple object $Y$ is the square of the multiplicity with which $Y$ occurs in $X$.  In general for a simple object $Y$ and any object $X$, we denote by $N^{Y}_{X}$ the natural number describing the multiplicity with which $[Y]$ appears in the simple object decomposition of $X$.  If $X$ is equivalent to a subobject of $Y$, we write $X\prec Y$.  We often write $X\otimes Y$ simply as $XY$ for objects $X$ and $Y$.

 \ \ For two simple objects $X$ and $Y$, we have that $[X\otimes Y]\cong \oplus_{Z} N^{Z}_{XY}[Z]$.  These means that the the tensor product of $X$ and $Y$ decomposes as a direct sum of simple objects of which $N^{Z}_{XY}$ are equivalent to the simple object $Z$.  The $N^{Z}_{XY}$ specify the \textit{fusion rules} of the tensor category and are a critical piece of data.

The fusion algebra is the complex linear span of isomorphism classes of simple objects $\C[\text{Irr}(\ca)]$, with multiplication given by linear extension of the fusion rules.  This algebra has a $*$-involution defined by $[X]^{*}=[\overline{X}]$ and extended conjugate-linearly.  This algebra is a central object of study in approximation and rigidity theory for rigid $C^{*}$-tensor categories.  

\ \ For a more detailed discussion and analysis of the axioms of a rigid $C^{*}$-tensor category, see the paper of Longo and Roberts \cite{LR}, where these categories were first defined and studied with this axiomatization.

\ \  In a rigid $C^{*}$-tensor category, we can define the statistical dimension of an object $d(X)=inf _{(R,\overline{R})} || R|| ||\overline{R}||$, where the infimum is taken over all solutions to the conjugate equations for an object $X$.  The function $d(\ .\ ):Obj(\ca)\rightarrow \mathbb{R}_{+}$ depends on objects only up to unitary isomorphism.  It is multiplicative and additive and satisfies $d(X)=d(\overline{X})$ for any dual of $X$.  We called solutions to the conjugate equations \textit{standard} if $||R||=||\overline{R}||=d(X)^{\frac{1}{2}}$, and such solutions are essentially unique.  For standard solutions of the conjugate equations, we have a well defined trace $Tr_{X}$ on endomorphism spaces $Mor(X,X)$ given by

 $$Tr_{X}(f)=R^{*}(1_{\overline{X}}\otimes f)R=\overline{R}^{*}(f\otimes 1_{\overline{X}})\overline{R}\in Mor(id,id)\cong \C$$  

This trace does not depend on the choice of dual for $X$ or on the choice of standard solutions.  We note that $Tr(1_{X})=d(X)$.  See \cite{LR} for details.

\end{subsection}

\begin{subsection}{The Tube Algebra}
The tube algebra $\AC$ of a semi-simple rigid tensor category $\ca$ was introduced by Ocneanu in \cite{O}.  This algebra has proved useful for computing the Drinfeld center $Z(\ca)$, since finite dimensional irreducible representations of $\AC$ are in one-to-one correspondence with simple objects of $Z(\ca)$ (see \cite{I} and \cite{Mu2}).  Stefaan Vaes has pointed out that in general, arbitrary representations of $\AC$ are in one-to-one correspondence with objects in the category $Z(\text{ind-}\ca)$ studied by Neshveyev and Yamashita in \cite{NT}.

   For a rigid $C^{*}$-tensor category $\ca$, choose a representative $X_{k}\in k$ for $k\in \Irr$.  We choose the strict tensor identity $id$ to represent its class and label it with index $0$, so that $X_{0}=id$.

The tube algebra is defined as the algebraic direct sum (finiteley many non-zero terms)

$$\AC:=\oplus_{i,j,k\in \Irr} Mor(X_{k}\otimes X_{i}, X_{j}\otimes X_{k})$$

An element $x\in \AC$  is given by a sequence $x^{k}_{i,j}\in Mor(X_{k}\otimes X_{i}, X_{j}\otimes X_{k})$ with only finitely many terms non-zero.  Recall for a simple object $\alpha$ and arbitrary $\beta\in Obj(\ca)$, $Mor(\alpha, \beta)$ has a Hilbert space structure with inner product defined by $\eta^{*}\xi=\langle \xi, \eta\rangle  1_{\alpha}$.  Note that this inner product differs by the tracial inner product by a factor of $d(\alpha)$.

$\AC$ carries the structure of an associative $*$-algebra, defined by

$$(x\cdot y)^{k}_{i,j}=  \sum_{s, m, l\in \Irr}\ \sum_{V\in onb(X_k,\ X_m\otimes X_l)}(1_{j}\otimes V^{*}) ( x^{m}_{s,j}\otimes 1_{l})(1_{m}\otimes y^{l}_{i,s})(V\otimes 1_{i})$$

$$(x^{\#})^{k}_{i,j}=  (\overline{R}^{*}_{k}\otimes 1_{j}\otimes 1_{k}) (1_{k}\otimes (x^{\overline{k}}_{j,i})^{*}\otimes 1_{k})(1_{k}\otimes 1_{i}\otimes R_{k})$$

\medskip

\noindent where $R_{k}\in Mor(id, \overline{X}_{k}\otimes X_{k})$ and $\overline{R}_{k}\in   Mor(id, X_k\otimes \overline{X}_{k})$ are standard solutions to the conjugate equations for $X_{k}$.  We remark that we denote the $*$-involution by $\#$ to avoid confusion with the $*$-involution from $\ca$.  In the first sum above, $onb$ denotes an orthonormal basis with respect to the inner product described above, and we may have $onb(X_k,\ X_ m\otimes X_ l)=\varnothing$ if $X_{k}$ is not  a sub-object of $X_{m}\otimes X_{l}$.  We mention the above compact form for the definition of the tube algebra was borrowed from Stefaan Vaes.

We define the subspaces $\AC^{k}_{i,j}:=Mor(X_{k}\otimes X_{i}, X_{j}\otimes X_{k})\subset \AC$.  For arbitrary objects $\alpha\in Obj(\ca)$, we have a natural map $\Psi: Mor(\alpha\otimes X_{i}, X_{j}\otimes\alpha)\rightarrow \AC$ given by $$\displaystyle \Psi(f)=\sum_{k \prec \alpha}\sum_{V\in onb(k, \alpha)}   (1_{j}\otimes V^{*})f(V\otimes 1_{i}).$$

We will use this map later, in our analysis of the categories $(G_{2})_{q}$.  For each $k\in \Irr$, there is a projection $p_{m}\in \AC^{0}_{m,m}$ given by $p_{m}:=1_{m}\in Mor(id\otimes X_{m}, X_{m}\otimes id)\in \AC$.  In particular $(p_{m})^{k}_{i,j}=\delta_{k,0}\delta_{i,j}\delta_{j,m} 1_{m}$.

We see that $\AC_{0,0}=p_{0}\AC p_{0}$ is a corner of the tube algebra.  This corner is a unital $*$-algebra.  Recall the fusion algebra of $\ca$ is the complex linear span of isomorphism classes of simple objects $\C[\text{Irr}(\ca)]$.  Multiplication is the linear extension of fusion rules and $*$ is given on basis elements by the duality.  From the definition of multiplication in $\AC$, one easily sees the following:

\begin{prop}The fusion algebra $\C[\text{Irr}(\ca)]$ is $*$-isomorphic to $\AC_{0,0}$, via the map $[X_{k}]\rightarrow 1_{k}\in (X_{k}\otimes id, id\otimes X_{k})\in \AC^{k}_{0,0}$.
\end{prop}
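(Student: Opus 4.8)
The plan is to check that the stated map is a linear isomorphism and then that it intertwines both the products and the involutions. First I would identify the corner concretely: an element $x\in\AC_{0,0}=p_0\AC p_0$ has $x^k_{i,j}$ supported only on triples with $i=j=0$, so
$$\AC_{0,0}=\bigoplus_{k\in\Irr} Mor(X_k\otimes id,\ id\otimes X_k)=\bigoplus_{k\in\Irr} Mor(X_k,X_k).$$
Since each $X_k$ is simple, $Mor(X_k,X_k)=\C\,1_k$, so $\{1_k\}_{k\in\Irr}$ is a linear basis of $\AC_{0,0}$ indexed by the same set as the basis $\{[X_k]\}$ of $\C[\text{Irr}(\ca)]$. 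Hence $[X_k]\mapsto 1_k$ is a vector-space isomorphism, and it remains to see it is a $*$-homomorphism.

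For multiplicativity I would substitute $x=1_a$ and $y=1_b$ into the product formula. The factors $x^m_{s,j}$ and $y^l_{i,s}$ are nonzero only when $m=a$, $l=b$ and $s=i=j=0$, so every term outside these indices drops out and the product is supported on $\AC^k_{0,0}$. After using $X_0=id$ to erase the trivial tensor factors, each surviving summand is $V^*V$ with $V\in onb(X_k,\ X_a\otimes X_b)$; by the normalization $V^*V=\langle V,V\rangle 1_k=1_k$ of the inner product and the fact that this orthonormal basis has $\dim Mor(X_k,X_a\otimes X_b)=N^k_{ab}$ elements, I obtain
$$1_a\cdot 1_b=\sum_{k\in\Irr} N^k_{ab}\,1_k,$$
which is exactly the image of $[X_a][X_b]=\sum_k N^k_{ab}[X_k]$.

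For the involution I would substitute $x=1_a$ into the formula for $\#$. The factor $(x^{\overline{k}}_{j,i})^*$ is nonzero only when $\overline{k}=a$ and $i=j=0$, i.e. $k=\overline{a}$, so $(1_a)^\#$ is supported on $\AC^{\overline a}_{0,0}=Mor(X_{\overline a},X_{\overline a})$. On that component the formula collapses (after deleting the unit factors and using $(1_a)^*=1_a$) to the zig-zag $(\overline{R}^*_{\overline a}\otimes 1_{\overline a})(1_{\overline a}\otimes R_{\overline a})$. By the conjugate equations for $X_{\overline a}$ this equals $1_{\overline a}$, so $(1_a)^\#=1_{\overline a}$, matching $[X_a]^*=[\overline{X_a}]=[X_{\overline a}]$. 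I expect the last step to be the main obstacle: one must fix the chosen duals $\overline{X_k}=X_{\overline k}$ and verify that the particular snake appearing here genuinely straightens to the identity, which is precisely where standardness (sphericality) of the solutions $(R_{\overline a},\overline{R}_{\overline a})$ is needed, since this is the reversed snake relative to the two conjugate equations as stated. Once this is settled, the three checks together show $[X_k]\mapsto 1_k$ is a $*$-isomorphism.
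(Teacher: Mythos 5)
Your proof is correct and is precisely the direct verification from the definitions that the paper leaves to the reader (the paper offers nothing beyond ``one easily sees''): the same identification of the corner, the same collapse of the product formula to $\sum_{V}V^{*}V=N^{k}_{ab}1_{k}$, and the same collapse of $\#$ to a snake. One small correction on the step you single out as the main obstacle: the reversed snake $(\overline{R}^{*}_{k}\otimes 1_{k})(1_{k}\otimes R_{k})=1_{k}$ does not need standardness or sphericality of $(R_{k},\overline{R}_{k})$ --- it is the adjoint of the stated conjugate equation $(1_{X}\otimes R^{*})(\overline{R}\otimes 1_{X})=1_{X}$ and so holds for any solution in a $C^{*}$-category; what genuinely must be arranged, as you note, is that the representatives satisfy $\overline{X_{\overline{a}}}=X_{a}$ so that the middle tensor factor in the formula for $\#$ type-checks.
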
  

\end{subsection}

\begin{subsection}{Representations and property (T)}

\begin{defi} $Rep(\AC)$ is the category whose objects are non-degenerate $*$-homomorphisms $\pi:\AC\rightarrow B(H)$ for some Hilbert space $H$, and whose morphisms are bounded intertwiners.
\end{defi}

This category can be given the structure of a braided $C^{*}$-tensor category, though in general it is not rigid.  In the case that $\ca$ is fusion ($|\Irr|<\infty$), the category of finite dimensional representations is tensor equivalent to the Drinfeld center $Z(\ca)$ (see \cite{DGG} for details).

\begin{defi} A $*$-homomorphism $\pi:\C[\text{Irr}(\ca)]\rightarrow B(H)$ of the fusion algebra is \textit{admissible} if there exists a $\hat{\pi}\in Rep(\AC)$ such that $\hat{\pi}|_{\AC_{0,0}}$ is unitarily equivalent to $\pi$.
\end{defi}

We remark that this is not the original definition of admissible representation given by Popa and Vaes in \cite{PV}, but is equivalent by  \cite{GJ}, Corollary 6.9.

\begin{prop} (\cite{GJ}, Corolloary 4.8) Let $\phi: \AC_{0,0}\rightarrow \C$ be a linear functional.  The following are equivalent:
\begin{enumerate}
\item
$\phi$ is a vector state in an admissible representation.
\item
$\phi(p_{0})=1$, and $\phi(x^{\#}\cdot x)\ge 0$ for all $x\in \AC_{0,k}$ and $k\in \Lambda$.
\end{enumerate}
\end{prop}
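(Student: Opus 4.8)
The plan is to prove the equivalence by a Gelfand--Naimark--Segal (GNS) reconstruction adapted to the grading $\AC=\bigoplus_{i,j}p_i\AC p_j$ of the tube algebra, treating $(1)\Rightarrow(2)$ as the routine direction and concentrating the work on $(2)\Rightarrow(1)$.

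For $(1)\Rightarrow(2)$, suppose $\phi(\cdot)=\langle\hat\pi(\cdot)\xi,\xi\rangle$ for some $\hat\pi\in Rep(\AC)$ and a unit vector $\xi$. By definition $\phi$ is a vector state of the admissible representation $\hat\pi|_{\AC_{0,0}}$, which acts on the range of $\hat\pi(p_0)$ (the unit of the corner $\AC_{0,0}$); hence $\hat\pi(p_0)\xi=\xi$ and $\phi(p_0)=\|\hat\pi(p_0)\xi\|^2=\|\xi\|^2=1$. Since $\hat\pi$ is a $*$-homomorphism, $\hat\pi(x^\#\cdot x)=\hat\pi(x)^*\hat\pi(x)$, so $\phi(x^\#\cdot x)=\langle\hat\pi(x)\xi,\hat\pi(x)\xi\rangle=\|\hat\pi(x)\xi\|^2\ge 0$ for every $x\in\AC_{0,k}$. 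No further input is needed.

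For $(2)\Rightarrow(1)$, I would first extend $\phi$ to a functional $\tilde\phi$ on all of $\AC$ by setting it equal to zero off the corner $\AC_{0,0}$. The structural point is that the product rule $p_i\AC p_j\cdot p_{i'}\AC p_{j'}\subseteq\delta_{j,i'}\,p_i\AC p_{j'}$ forces the $\AC_{0,0}$-component of $z^\#\cdot z$, for any $z\in\AC$, to reduce to a sum of homogeneous terms each of the form $x^\#\cdot x$ with $x$ running over the summands $\AC_{0,k}$ appearing in (2); the off-diagonal cross terms land in blocks $p_i\AC p_j$ with $(i,j)\neq(0,0)$ and are annihilated by $\tilde\phi$ automatically, so no control of cross terms is required. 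Hence $\tilde\phi(z^\#\cdot z)\ge 0$ by hypothesis (2), and $\tilde\phi$ is a positive functional on the $*$-algebra $\AC$. I would then form the GNS space $H$ as the separation--completion of $\AC$ under $\langle z,w\rangle:=\tilde\phi(w^\#\cdot z)$, let $\AC$ act by the regular action $\hat\pi(a)[z]=[a\cdot z]$, and set $\xi:=[p_0]$. A short computation gives $\|\xi\|^2=\tilde\phi(p_0)=\phi(p_0)=1$, $\hat\pi(p_0)\xi=[p_0 p_0]=\xi$, and $\langle\hat\pi(a)\xi,\xi\rangle=\tilde\phi(p_0\,a\,p_0)=\phi(a)$ for $a\in\AC_{0,0}$; moreover $\tilde\phi(z^\#\cdot z)$ depends only on the component of $z$ in $\bigoplus_k\AC_{0,k}$, which makes $\xi$ cyclic.

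The main obstacle is verifying that $\hat\pi$ is genuinely an object of $Rep(\AC)$, i.e. that each $\hat\pi(a)$ is a bounded operator and that $\hat\pi$ is non-degenerate, since the abstract GNS construction for a $*$-algebra a priori yields only densely defined operators. Non-degeneracy I expect to follow from the local units $p_F:=\sum_{m\in F}p_m$ for finite $F\subset\Irr$, which form an approximate identity for $\AC$. Boundedness is the crux: $\AC$ is not a C*-algebra and carries infinitely many tube indices, so finite dimensionality is unavailable. I would control $\|\hat\pi(a)\|$ by first bounding the fusion generators $1_k\in\AC_{0,0}$ via the statistical-dimension estimates of Popa--Vaes (the universal norm of $[X_k]$ being governed by $d(X_k)$), then bootstrapping to arbitrary $a$ using the multiplicative structure and the Cauchy--Schwarz inequality for $\tilde\phi$; equivalently, one shows $\tilde\phi$ is continuous for the universal C*-norm on $\AC$ and descends to $C^*(\AC)$. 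Once boundedness and non-degeneracy are secured, $\hat\pi\in Rep(\AC)$, its restriction $\hat\pi|_{\AC_{0,0}}$ is admissible by definition, and $\phi=\langle\hat\pi(\cdot)\xi,\xi\rangle$ on $\AC_{0,0}$ exhibits $\phi$ as one of its vector states, completing $(2)\Rightarrow(1)$.
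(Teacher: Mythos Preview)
The paper does not give its own proof of this proposition; it is quoted verbatim as \cite{GJ}, Corollary~4.8. Your GNS outline is essentially the approach taken in that reference, and both the easy direction and the extension step are set up correctly: writing $z=\sum_{i,j}z_{i,j}$ with $z_{i,j}\in\AC_{i,j}$, the $\AC_{0,0}$-component of $z^{\#}\cdot z$ is exactly $\sum_{k}z_{0,k}^{\#}\cdot z_{0,k}$, so extending $\phi$ by zero does produce a positive functional $\tilde\phi$ on all of $\AC$, and the cyclic vector $[p_0]$ recovers $\phi$ on the corner.

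Where the sketch needs tightening is the boundedness step, and your phrasing comes close to circularity. The Popa--Vaes dimension estimate you invoke is stated for \emph{admissible} representations of the fusion algebra, but admissibility of $\hat\pi|_{\AC_{0,0}}$ is exactly what you are trying to establish; likewise, saying ``$\tilde\phi$ is continuous for the universal $C^*$-norm on $\AC$'' presupposes that positive functionals on $\AC$ are automatically norm-continuous, which is again the point at issue. What makes the argument go through (and what is carried out in \cite{GJ}) is a direct algebraic inequality inside $\AC$: for each homogeneous $a\in\AC_{i,j}$ one produces, from the finite dimensionality of the morphism spaces and the categorical trace, a constant $C(a)$ such that $C(a)\,p_i-a^{\#}\cdot a$ is itself a finite sum of elements of the form $b^{\#}\cdot b$. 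This yields $\tilde\phi(z^{\#}a^{\#}a\,z)\le C(a)\,\tilde\phi(z^{\#}z)$ for every $z$, hence $\|\hat\pi(a)\|\le C(a)^{1/2}$, with no appeal to an ambient $C^*$-norm or to already-known admissible representations. For the fusion generators this recovers $C([X_k])=d(X_k)^2$; that is the \emph{source} of the Popa--Vaes bound rather than a consequence of it. Once each homogeneous piece is bounded, general $a\in\AC$ (a finite sum of such pieces) is bounded, and non-degeneracy via the local units $p_F=\sum_{m\in F}p_m$ is routine, completing $(2)\Rightarrow(1)$ as you outlined.
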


We call the collection of functionals satisfying the equivalent conditions of the above proposition \textit{annular states}, denoted $\Phi_{0}(\AC)$.  The word annular comes from the correspondence between representations of the tube algebra and representations of the affine annular category of a planar algebra introduced by Jones (see \cite{Jo3}, \cite{Jo4}, \cite{DGG}, \cite{GJ}).  The idea is that annular states play a similar role in our representation theory to positive definite functions in the representation theory of groups.

Every $\phi\in \ell^{\infty}(\text{Irr}(\ca))$ defines a linear functional $\hat{\phi}$ on the fusion algebra $\C[\text{Irr}(\ca)]$ given by $$\hat{\phi}(\sum_{k} c_{k}[X_{k}])=\sum_{k} c_{k}d(X_{k})\phi([X_{k}])$$

\begin{defi} $\phi\in \ell^{\infty}(\text{Irr}(\ca))$ is a \textit{cp-multiplier} if $\hat{\phi}$ is an annular state.
\end{defi}

Again, this is not the original definition of cp-multiplier introduced by Popa and Vaes.  For their original definition see \cite{PV}, Defintion 3.1. That the definition presented here is equivalent to theirs follows from \cite{GJ}, Theorem 6.6.   Without going into the details of the original definition, we describe its motivation as follows:

\ \ Associated to a subfactor $N\subseteq M$, Popa defined a von Neumann algebra inclusion $T\subseteq S$, called the symmetric enveloping inclusion \cite{Po0}.  Popa gave definitions for analytical properties of a subfactor in terms of sequences of UCP maps $\phi_{i}:S\rightarrow S$ which are $T$ bimodular, satisfying certain convergence properties (see \cite{Po3}).  It turns out that decomposing $L^{2}(S)$ as a $T-T$ bimodule shows that such maps are  determined by functions on $\text{Irr}(\ca)$, where $\ca$ is the category of ``even" bimodules of the subfactor.  Their definition of $cp$-multiplier is a necessary and sufficient condition for a function  $\phi\in \ell^{\infty}(\text{Irr}(\ca))$ to produce a $T$-$T$ bimodular UCP map on $S$ in the prescribed way.  Furthermore, conditions for approximation and rigidity properties on the sequence of UCP maps can be directly translated into conditions on the corresponding functions in $\ell^{\infty}(\text{Irr}(\ca))$ with out reference to the symmetric enveloping inclusion.  This results in definitions for approximation and rigidity properties for the category $\ca$ without reference to a subfactor.

\begin{defi} (\cite{PV}, Definition 5.10)  A rigid $C^{*}$-tensor category has \textit{property (T)} if every sequence of cp-multipliers $\phi_{i}$ which converges to $1$ pointwise on $\text{Irr}(\ca)$ converges uniformly.
\end{defi}

  There exists a $C^{*}$-closure, $C^{*}(\ca)$, of the fusion algebra introduced in \cite{PV} such that continuous Hilbert space representations of this $C^{*}$-algebra are in one-to-one correspondence with admissible representations.    It can be shown for $X\in \text{Irr}(\ca)$, in any admissible representation $\pi$ we have $\| \pi(X)\| \le d(X)$ (see for example \cite{PV}, Proposition 4.2 or \cite{GJ}, Lemma 4.4).

This bound permits the definition of a universal admissible representation $\pi_{u}$.  $C^{*}(\ca)$ is defined by taking the $C^{*}$-completion of $\pi_{u}(\C[\text{Irr}(\ca)])$.  The norm of an element in the fusion algebra can be written

$$\|f\|_{u}=\sup_{\phi\in \Phi_{0}(\AC)}\phi(x^{\#}\cdot x)^{\frac{1}{2}}.$$

  We remark that in fact we can define such a universal representation and $C^{*}$-algebra for the entire tube algebra $\AC$ but we do not need this here (see \cite{GJ}, Definition 4.6).

\  \ As with groups, there are many equivalent characterizations of property (T). We record the following provided by Popa and Vaes:

\begin{prop}(\cite{PV}, Proposition 5.5)  $\ca$ has property (T) if and only if there exists a projection $p\in C^{*}(\ca)$ such that $\alpha p=d(\alpha)p$ for all $\alpha\in \text{Irr}(\ca)$. 
\end{prop}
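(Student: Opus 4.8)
The plan is to establish the categorical analogue of the classical fact that a discrete group has property (T) exactly when its full group C*-algebra contains a Kazhdan projection, and to imitate the group-theoretic proof. Write $\epsilon$ for the trivial character of $C^{*}(\ca)$, i.e.\ the one-dimensional admissible representation determined by $\epsilon(\alpha)=d(\alpha)$ (equivalently the cp-multiplier $\phi\equiv 1$), and for an admissible representation $\pi$ on $H_{\pi}$ let $H_{0}^{\pi}=\{v:\pi(\alpha)v=d(\alpha)v\ \text{for all}\ \alpha\in\Irr\}$ be its trivial isotypic subspace. A projection satisfying $\alpha p=d(\alpha)p$ is precisely one whose range, in every admissible representation, lies inside $H_{0}^{\pi}$; the relevant (nonzero) projection is the support of $\epsilon$, characterized by $\epsilon(p)=1$ and $\pi(p)=$ the orthogonal projection onto $H_{0}^{\pi}$ for every $\pi$. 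The content of the existence assertion is thus that this spectral projection actually lies in the norm-closed algebra $C^{*}(\ca)$.

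For the implication that such a $p$ forces property (T), suppose $\phi_{i}$ are cp-multipliers with $\phi_{i}\to 1$ pointwise on $\Irr$. Each $\hat\phi_{i}$ is a vector state in an admissible representation, hence a state $\omega_{i}$ on $C^{*}(\ca)$ with $\omega_{i}(\alpha)=d(\alpha)\phi_{i}(\alpha)$; since $\omega_{i}(\alpha)\to d(\alpha)=\epsilon(\alpha)$ and states are uniformly bounded on the dense subalgebra $\Fus$, we get $\omega_{i}\to\epsilon$ weak-$*$, and in particular $\omega_{i}(p)\to\epsilon(p)=1$. Passing to the GNS triple $(\pi_{i},H_{i},\xi_{i})$ of $\omega_{i}$ and writing $\eta_{i}=\pi_{i}(p)\xi_{i}$, $\zeta_{i}=\xi_{i}-\eta_{i}$, the projection identity gives $\|\zeta_{i}\|^{2}=1-\omega_{i}(p)\to 0$. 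Because $\alpha p=d(\alpha)p$ we have $\pi_{i}(\alpha)\eta_{i}=d(\alpha)\eta_{i}$, so $\pi_{i}(\alpha)\xi_{i}-d(\alpha)\xi_{i}=(\pi_{i}(\alpha)-d(\alpha))\zeta_{i}$, and using $\|\pi_{i}(\alpha)\|\le\|\alpha\|_{u}\le d(\alpha)$ we obtain
\[
|\phi_{i}(\alpha)-1|=\frac{|\omega_{i}(\alpha)-d(\alpha)|}{d(\alpha)}=\frac{|\langle(\pi_{i}(\alpha)-d(\alpha))\zeta_{i},\xi_{i}\rangle|}{d(\alpha)}\le 2\|\zeta_{i}\|.
\]
As the right-hand side is independent of $\alpha$, we conclude $\sup_{\alpha}|\phi_{i}(\alpha)-1|\le 2\|\zeta_{i}\|\to 0$, that is $\phi_{i}\to 1$ uniformly, which is property (T). The cancellation of the factor $d(\alpha)$ is exactly what upgrades pointwise convergence to uniform convergence, and this half is short and entirely formal.

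For the converse I would construct $p$ by producing a spectral gap. Since $\Irr$ is countable, the set of cp-multipliers is a weak-$*$ compact, metrizable, convex subset of the unit ball of $\ell^{\infty}(\Irr)$ having $1$ as an extreme point, and property (T) says precisely that the weak-$*$ and norm topologies agree at the point $1$ (otherwise some sequence would converge to $1$ pointwise but not uniformly). This yields a finite set $F\subseteq\Irr$ and $\delta>0$ so that any cp-multiplier $\phi$ with $|\phi(\alpha)-1|<\delta$ for $\alpha\in F$ satisfies $\|\phi-1\|_{\infty}<\tfrac12$. One then forms the finite ``Laplacian''
\[
\Delta_{F}=\sum_{\alpha\in F}\Big(d(\alpha)\,p_{0}-\tfrac12(\alpha+\bar\alpha)\Big)\in C^{*}(\ca),
\]
which is positive, lies in the fusion algebra, and has $\ker\pi(\Delta_{F})\supseteq H_{0}^{\pi}$. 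Granting that $0$ is isolated in the spectrum of $\Delta_{F}$, the characteristic function $\chi_{\{0\}}$ is continuous there, so $p:=\chi_{\{0\}}(\Delta_{F})$ lies in $C^{*}(\Delta_{F})\subseteq C^{*}(\ca)$, is the projection onto $\bigoplus_{\pi}H_{0}^{\pi}$, and satisfies $\alpha p=d(\alpha)p$ by construction.

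The main obstacle is producing this gap, i.e.\ a uniform Kazhdan constant $\kappa>0$ with $\langle\pi(\Delta_{F})v,v\rangle\ge\kappa$ for every unit vector $v\perp H_{0}^{\pi}$. The difficulty is that the generators $\alpha$ act as contractions of norm $\le d(\alpha)$ rather than as unitaries, so the classical move — averaging an almost-invariant vector over its orbit and taking the (invariant) circumcenter of the resulting small convex set — does not transcribe verbatim. Bridging from the finite local condition $(F,\delta)$ to a genuine invariant vector, and hence to the gap, is the step where I expect to need a convexity argument carefully adapted to the non-unitary setting, and is presumably where the real content of \cite{PV}, Proposition 5.5 resides.
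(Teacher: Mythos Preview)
The paper does not supply a proof of this proposition at all: it is stated as a citation of \cite{PV}, Proposition~5.5, and immediately followed by Corollary~2.7 without any argument. So there is no ``paper's own proof'' to compare your attempt against; the result is imported wholesale from Popa--Vaes.

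As to the proposal itself, the implication ``projection $\Rightarrow$ (T)'' is essentially correct. One small point you glossed over: you assert $\epsilon(p)=1$ without justification. This is true, but it requires a word: if $p\neq 0$ (which must be assumed, else the statement is vacuous), then in the universal representation there is a nonzero $v$ in the range of $\pi_{u}(p)$; the condition $\alpha p=d(\alpha)p$ forces $\pi_{u}(\alpha)v=d(\alpha)v$, so the vector state at $v$ coincides with $\epsilon$ on the dense fusion algebra and hence equals $\epsilon$, giving $\epsilon(p)=\langle\pi_{u}(p)v,v\rangle/\|v\|^{2}=1$.

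The converse, however, is not a proof but a proof sketch that you yourself flag as incomplete. You correctly identify that property~(T) gives a finite ``Kazhdan set'' $(F,\delta)$ and form the Laplacian $\Delta_{F}$, but then write that establishing the spectral gap ``is the step where I expect to need a convexity argument carefully adapted to the non-unitary setting, and is presumably where the real content of \cite{PV}, Proposition~5.5 resides.'' That is exactly right, and it means this direction is not proved. The genuine obstacle you name --- that the generators act as contractions of norm $\le d(\alpha)$ rather than unitaries, so the classical averaging/circumcenter trick does not transcribe directly --- is real, and you have not indicated how to overcome it. Until you either supply that argument or cite it, the converse remains open in your write-up.
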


The so-called trivial representation $1_{\ca}$ is the one dimensional representation of the fusion algebra spanned by $v_{0}$ such that $1_{\ca}([X])v_{0}=d(X)v_{0}$ for all $X\in \text{Irr}(\ca)$.  Viewing $1_{\ca}$ as an annular state (\cite{GJ}, Lemma 4.11), the corresponding cp-multiplier is the constant function $1$ in $\ell^{\infty}(\text{Irr}(\ca))$.

\ \ For categories with abelian fusion rules (for example, all braided categories), $C^{*}(\ca)\cong C(Z)$ for some compact Hausdorff space $Z$.  Points in $Z$ correspond to one-dimensional representations of the fusion algebra, so $1_{\ca}\in Z$.  We have the following easy consequence of the above proposition:

\begin{cor} If $\ca$ has abelian fusion rules so that $C^{*}(\ca)\cong C(Z)$ for some compact Hausdorff space $Z$, then $\ca$ has property (T) if and only if the trivial representation $1_{\ca}$ is isolated in $Z$
\end{cor}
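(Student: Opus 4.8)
The plan is to convert the projection criterion of the preceding proposition into the topological statement about isolation, using the Gelfand picture $C^{*}(\ca)\cong C(Z)$. Under this isomorphism the fusion algebra sits inside $C(Z)$ as continuous functions, and each point $z\in Z$ is a character $\chi_{z}$ of the fusion algebra; for $\alpha\in\text{Irr}(\ca)$ the function representing $\alpha$ has value $\chi_{z}(\alpha)$ at $z$. The projections of $C(Z)$ are exactly the characteristic functions $\chi_{U}$ of clopen subsets $U\subseteq Z$. By the previous proposition, $\ca$ has property (T) if and only if there is a nonzero projection $p\in C^{*}(\ca)$ with $\alpha p=d(\alpha)p$ for every $\alpha\in\text{Irr}(\ca)$.

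First I would fix such a $p$ and write $p=\chi_{U}$ for a clopen $U\subseteq Z$. Evaluating the relation $\alpha\chi_{U}=d(\alpha)\chi_{U}$ pointwise shows that $\chi_{z}(\alpha)=d(\alpha)$ for every $z\in U$ and every $\alpha\in\text{Irr}(\ca)$. Since the $\chi_{z}$ are characters and the classes $[\alpha]$ generate the fusion algebra, a character satisfying $\chi_{z}(\alpha)=d(\alpha)$ for all $\alpha$ is forced to agree with the trivial representation, so $\chi_{z}=1_{\ca}$; as distinct points of $Z$ give distinct characters, this means $z=1_{\ca}$. Hence $U\subseteq\{1_{\ca}\}$, and since $p\ne 0$ we get $U=\{1_{\ca}\}$, so the singleton $\{1_{\ca}\}$ is clopen and $1_{\ca}$ is isolated.

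Conversely, if $1_{\ca}$ is isolated then $\{1_{\ca}\}$ is open, and since $Z$ is Hausdorff it is also closed, hence clopen; its characteristic function $\chi_{\{1_{\ca}\}}$ is then a genuine nonzero projection in $C(Z)\cong C^{*}(\ca)$. Because the function representing $\alpha$ takes the value $d(\alpha)$ at $1_{\ca}$, this projection satisfies $\alpha\,\chi_{\{1_{\ca}\}}=d(\alpha)\,\chi_{\{1_{\ca}\}}$ for all $\alpha$, so property (T) holds by the proposition. The argument is essentially a dictionary between the algebraic relation and the topology of $Z$; the only point needing care is the simultaneity over all $\alpha\in\text{Irr}(\ca)$, which is exactly what pins the support of $p$ down to the single point $1_{\ca}$ rather than to some larger fiber, together with the requirement that the witnessing projection be nonzero so that the empty clopen set is excluded.
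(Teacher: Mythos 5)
Your proposal is correct and follows essentially the same route as the paper: identify the projection with the characteristic function of a clopen set, observe that every point in its support is a character agreeing with the trivial representation on all of $\text{Irr}(\ca)$ and hence equal to $1_{\ca}$ (the paper phrases this via Stone--Weierstrass, you via Gelfand duality separating points, which is the same fact), and conversely take $\delta_{\{1_{\ca}\}}$ when $1_{\ca}$ is isolated. Your explicit remark that the witnessing projection must be nonzero is a small point of care the paper leaves implicit, but it does not change the argument.
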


\begin{proof} If $\ca$ has abelian fusion rules $C^{*}(\ca)\cong C(Z)$, where $Z$ is the spectrum of $C^{*}(\ca)$.   If $1_{\ca}$ is isolated in the spectrum, then the characteristic function $\delta_{\{1_{\ca}\}}\in C(Z)\cong C^{*}(\ca)$ is a projection satisfying the required property.  Conversely, if we had such a projection $p$, then it could be represented by the characteristic function of some clopen set $Y\subseteq Z$.  Since $\alpha p=d(\alpha) p$, this implies that when viewing an object $\alpha$ as a function on $Z$,  $\alpha|_{Y}=d(\alpha)=\alpha(1_{\ca})$.  Extending by linearity, we see that for an arbitrary element in the fusion algebra  $\beta$, $\beta|_{Y}=1_{\ca}(\beta)$.  This equality extends to the $C^{*}$-closure $C^{*}(\ca)\cong C(Z)$.  Since the points of $Y$ are not separated by $C(Z)$ from $1_{\ca}$, by the Stone-Weierstrass theorem we have $Y=\{1_{\ca}\}$, hence $\{1_{\ca}\}$ is clopen, hence $1_{\ca}$ is isolated in $Z$.
\end{proof}

\end{subsection}
\end{section}

\begin{section}{$(G_{2})_{q}$ categories}

There are many ways to describe rigid $C^{*}$-tensor categories.  One of the most useful is the planar algebra approach introduced by Jones \cite{Jo2}.  The idea is to use formal linear combinations of planar diagrams to represent morphisms in your category.  These diagrams satisfy some linear dependences called skein relations in modern parlance.  The most famous skein relations are the ones defining the Jones and HOMFLY polynomials.

The $(G_{2})_{q}$ categories we describe are a particularly nice type of planar algebra called a trivalent category. These were introduced in their current form by Morrison, Peters, and Snyder \cite{MPS}.  Using dimension restrictions on morphism spaces as a notion of ``small",  they were able to classify the ``smallest'' examples.  The $(G_{2})_{q}$ categories appear in their classification list.

\begin{defi} ( \cite{MPS}, Definition 2.4)  A \textit{trivalent category} $\ca$ is a non-degenerate, evaluable, pivotal category over $\C$, with a tensor generating object $X$ satisfying dim $Mor(id, X)=0$, dim $Mor(id, X\otimes X)=1$, dim $Mor(id, X\otimes X\otimes X)=1$, generated (as a planar algebra) by a trivalent vertex for $X$ .
\end{defi}

We summarize the basic properties of trivalent categories:

\begin{enumerate}
\item
Objects in the category can be represented by $\mathbb{N}\cup \{0\}$, and correspond to tensor powers of a generating object $X$.
\item
$Mor(k,m)$ is the complex linear span of isotopy classes of planar trivalent graphs embedded in a rectangle, with $m$ boundary points on the top of the rectangle, $k$ boundary points on the bottom, and no boundary points on the sides of the rectangle.  These diagrams are subject to \textit{skein relations}, which are linear dependences among the trivalent graphs which make $Mor(k,m)$ finite dimensional. (Note: We consider graphs with no vertices at all, namely line segments attached to the boundaries, as trivalent graphs)
\item
$Mor(0,0)\cong \mathbb{C}$.  In other words, our skein relations reduce every closed trivalent graph to a scalar multiple of the empty trivalent graph.  Identifying the empty graph with $1\in \C$, this means we have associated to every closed trivalent graph a complex number.
\item
Composition of morphisms is vertical stacking of rectangles.
\item
Tensor product on objects is addition of natural numbers, on morphisms it is horizontal stacking of rectangles.
\item
Duality is given by rotation by $\pi$ or $-\pi$ (these manifestly agree in our setting). 
\item
The linear functional $Tr:Mor(k,k)\rightarrow \C$ given by connecting the top strings of the rectangle to the bottom is non-degenerate.
\end{enumerate}

\begin{defi} A trivalent category is a \textit{$C^{*}$-trivalent category} if the maps $*: Mor(k,m)\rightarrow Mor(m,k) $ given by reflecting graphs across a horizontal line and conjugating complex coefficients are well-defined modulo the skein relations, and $Tr(x^{*}x)\ge 0$ for every $x\in Mor(k,m)$, and $k,m\in \mathbb{N}\bigcup\{0\}$.
\end{defi}

From a $C^{*}$-trivalent category, we can construct a rigid $C^{*}$-tensor category as follows:
First, it can be shown that a category satisfying all these conditions has a negligible category ideal, generated by diagrams with $Tr(x^{*}x)$=0.  Quotienting by this produces a trivalent category with condition $8$ replaced by $Tr(x^{*}x)>0$ .
Next, we take the projection completion.  Objects in this category will be projections living in some $Mor(k,k)$. For two projections $P\in Mor(k,k),\ Q\in Mor(m,m)$, $Mor(P,Q)=\{f\in Mor(k,m)\ :\ QfP=f\}$.  Now we formally add direct sums to the category.  The resulting category will have objects direct sums of projections, and morphisms matrices of the morphisms between projections.  The result is a rigid $C^{*}$-tensor category, which we also call $\ca$.  

Notice the duality map we have defined is automatically pivotal.  Also the strict tensor identity $id$ is given by the empty diagram.  Another consequence of the definitions is that the generating object $X$ is symmetrically self-dual (see \cite{BHP}, Definition 2.10).

The $(G_{2})_{q}$ trivalent categories were introduced by Kuperberg in \cite{K} and \cite{K2}.  Kuperberg showed that these categories are equivalent to the category of (type 1) finite dimensional representations of the Drinfeld-Jimbo quantum groups $U_{q}(\mathfrak{g}_{2})$.

To define a trivalent category, it suffices to specify a set of skein relations.  In general it is a difficult problem to determine whether an set of skein relations produces a trivalent category.  In particular, one has to verify that your relations are consistent and evaluable.  Otherwise you may end up with $Mor(0,0)$ being $0$ dimensional, or with infinite dimensional morphism spaces.  Kuperberg showed the following skein relations are indeed consistent and evaluable, resulting in a trivalent category.  The skein theory we present for $(G_{2})_{q}$ can be found in \cite{MPS}, Definition 5.21.  It differs from Kuperberg's description in two ways:  The trivalent vertex is normalized, and the $q^{2}$ here is Kuperberg's $q$.

\begin{defi} $(G_{2})_{q}$ for strictly positive $q$ is the trivalent category defined by the following skein relations:
\end{defi}

$$\gra{delta}=\delta:=q^{10}+q^{8}+q^{2}+1+q^{-2}+q^{-8}+q^{-10}$$
$$\gra{pop}=0$$
$$\grb{bub}=\grb{line}$$
$$\grb{tri}=c\grb{triv}$$
$$\grb{square}=a\left( \grb{i}+\grb{h}\right)+b \left(\grb{e}+\grb{id}\right)$$
$$\grb{pent}=f\left( \grb{pent1}+\text{rotations}\right)+g\left( \grb{pent2}+\text{rotations}  \right)$$

Where

$$a=\frac{q^{2}+q^{-2}}{(q+1+q^{-1})(q-1+q^{-1})(q^{4}+q^{-4})}$$
$$b=\frac{1}{(q+1+q^{-1})(q-1+q^{-1})(q^{4}+q^{-4})^{2}}$$
$$c=-\frac{q^{2}-1+q^{-2}}{q^{4}+q^{-4}}$$
$$f=-\frac{1}{(q+1+q^{-1})(q-1+q^{-1})(q^{4}+q^{-4})}$$
$$g=-\frac{1}{(q+1+q^{-1})^{2}(q-1+q^{-1})^{2}(q^{4}+q^{-4})^{2}}$$

\medskip

\cite{K}, \cite{K2}, and \cite{MPS} shows this category is actually spherical.  The duality maps $\cup$ and $\cap$ provide standard solutions for the simple object (minimal projection) spanning $Mor(1,1)$.  This is the object $X$, which tensor generates our category.

 Kuperberg showed that this category is isomorphic (not just equivalent) to the spherical category generated by the $7$-dimensional fundamental representation (which we also call $X$) of $U_{q}(\mathfrak{g}_{2})$ (see \cite{K2} Theorem 5.1). A single string corresponds to the object $X$ in $Rep(U_{q}(\mathfrak{g}_{2}))$, hence the natural number $k$ an an object in $(G_{2})_{q}$ corresponds to the object $X^{\otimes k}$ in $Rep(U_{q}(\mathfrak{g}_{2}))$.  Since $X$ tensor generates $Rep(U_{q}(\mathfrak{g}_{2}))$, we have the whole category appearing.

 In both Kuperberg's work and Morrison, Peters, and Snyder's no $*$-structure is considered.  However, $U_{q}(\mathfrak{g}_{2})$ has a natural $*$-structure for positive $q\ne 1$ (along with all Drinfeld-Jimbo quantum groups), and it is shown, for example, in \cite{NT}, Chapter $2.4$, that the category of finite dimensional $*$-representations is a rigid $C^{*}$-tensor category.  Every type 1 finite dimensional representation of $U_{q}(\mathfrak{g}_{2})$ for  $q>0$ is unitarizable (\cite{CP}, Chapter 10) and thus the rigid $C^{*}$-tensor category of finite dimensional unitary type 1 representations $\mathscr{C}_{q}(\mathfrak{g}_2)$ is monoidally equivalent to $Rep(U_{q}(\mathfrak{g}_{2}))$.  In the appendix, we show that the $*$-structure from quantum groups transports to $(G_{2})_{q}$ as the trivalent $*$-structure  defined by reflecting a diagram across a horizontal line (see Proposition 5.1 in the Appendix).  Thus we can consider $(G_{2})_{q}$ as a $C^{*}$-trivalent category.

To prove $(G_{2})_{q}$ has property (T), we need to know the structure of $Mor(2,2)$.  $Mor(2,2)$ is a 4-dimensional abelian $C^{*}$-algebra.  To determine the minimal projections, we set $$\xi:=\sqrt{\delta^{2}c^{4}+2\delta(c^{4}-2c^{3}-c^{2}+4c+2)+(c^{2}-2c-1)^{2}}=\frac{(1+q^{2})^{2}(1-q^{2}+q^{6}-q^{8}+q^{10}-q^{14}+q^{16})}{q^{6}(1+q^{8})}.$$

This is manifestly non-zero for $q\ne 1$ and $q>0$.  

 \begin{prop}(\cite{MPS}, Proposition $4.16$) The minimal idempotents in the finite dimensional abelian algebra $Mor(2,2)$  are given by  
$$\frac{1}{\delta}\gra{e},\ \gra{i}$$ 
and the two idempotents

$y_{\pm}=\frac{-(\delta+1)c^{2}\pm \xi+1}{\pm 2\xi} \gra{id}+\frac{\delta(c^{2}-2c-2)\mp \xi +c^{2}-2c-1}{\pm 2\delta \xi} \gra{e} -\frac{\delta(c+2)c\pm \xi +c^{2}+1}{\pm 2\xi} \gra{i}+\frac{\delta c+\delta+c}{\pm \xi} \gra{h}.$

\end{prop}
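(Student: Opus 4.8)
The plan is to work entirely inside the $4$-dimensional commutative $C^{*}$-algebra $Mor(2,2)$. Being abelian and $4$-dimensional, it is $*$-isomorphic to $\C^{4}$, so it has exactly four minimal (primitive) idempotents; these are unique, pairwise orthogonal, and sum to the unit $\gra{id}$. Thus it suffices to exhibit four nonzero pairwise-orthogonal idempotents, and the four displayed expressions must then be precisely the minimal ones. I would use the diagram basis $\{\gra{id},\ \gra{e},\ \gra{i},\ \gra{h}\}$ appearing in the square relation, and the first task is to compute the full multiplication table under vertical stacking, reducing every composite via the defining skein relations: the loop value $\delta$, the bubble relation $\grb{bub}=\grb{line}$, the triangle relation with coefficient $c$, and the square relation with coefficients $a,b$.

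Two of the four idempotents then come out immediately. Stacking $\gra{e}$ on itself produces a closed loop, so $\gra{e}\cdot\gra{e}=\delta\,\gra{e}$ and $\frac{1}{\delta}\gra{e}$ is the rank-one Jones projection onto the summand $id\prec X\otimes X$. Writing $v:X\otimes X\to X$ for the trivalent vertex, the diagram $\gra{i}$ is the composite $v^{*}v$, and $(v^{*}v)^{2}=v^{*}(vv^{*})v$; the bubble relation forces $vv^{*}=1_{X}$, so $\gra{i}\cdot\gra{i}=\gra{i}$ is the projection onto $X\prec X\otimes X$. These two are orthogonal because $\gra{e}\cdot\gra{i}$ contains the subdiagram $\cap\circ v^{*}\in Mor(X,id)$, which vanishes since $\dim Mor(id,X)=0$.

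The remaining two idempotents live in the $2$-dimensional complementary subalgebra $B=\left(\gra{id}-\frac{1}{\delta}\gra{e}-\gra{i}\right)Mor(2,2)$. I would parametrize a general element of $B$ in the diagram basis and impose $y^{2}=y$; using the multiplication table this collapses to a single quadratic equation in one scalar unknown. Its two roots produce the coefficient data of $y_{\pm}$, and the square root of its discriminant is exactly the quantity $\xi$ introduced before the proposition — this is the source of the $\pm\xi$ and of the overall $\pm2\xi$ and $\pm2\delta\xi$ denominators. Verifying that this discriminant equals the stated closed form $\delta^{2}c^{4}+2\delta(c^{4}-2c^{3}-c^{2}+4c+2)+(c^{2}-2c-1)^{2}$, and hence matches the given radical in $q$, is a finite algebraic check. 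Producing four nonzero pairwise-orthogonal idempotents (with sum $\gra{id}$ as an automatic consistency check) then identifies them with the minimal idempotents.

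The main obstacle is the multiplication table itself, and specifically the products involving $\gra{h}$, such as $\gra{h}\cdot\gra{h}$ and $\gra{i}\cdot\gra{h}$: stacking these diagrams creates a genuine four-valent square configuration that must be resolved through the square relation, with the resulting closed subdiagrams evaluated via the triangle and bubble relations. Once those structure constants are correctly in hand, the identification of $\frac{1}{\delta}\gra{e}$ and $\gra{i}$ is essentially by inspection and the derivation of $y_{\pm}$ is just the quadratic formula; the only real risk is bookkeeping of the scalar factors $a,b,c,\delta$ through the diagrammatic reductions.
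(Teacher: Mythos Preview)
The paper does not give its own proof of this proposition: it is quoted verbatim as \cite{MPS}, Proposition~4.16, and is used as a black box in the subsequent analysis of $\AC_{0,0}$. So there is nothing in the paper to compare your argument against, beyond the statement itself.

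Your plan is nonetheless correct and is essentially the standard derivation one would expect (and close in spirit to what \cite{MPS} actually do). A few minor comments. First, invoking the $C^{*}$-structure to conclude $Mor(2,2)\cong\C^{4}$ is a shortcut available in this paper's setting but not in the purely algebraic setting of \cite{MPS}; there one instead uses non-degeneracy of the trace to rule out nilpotents and obtain semisimplicity. Second, your identification of the two ``easy'' idempotents is exactly right: the bubble relation gives $\gra{i}^{2}=\gra{i}$, the loop value gives $(\tfrac{1}{\delta}\gra{e})^{2}=\tfrac{1}{\delta}\gra{e}$, and their orthogonality follows from $\dim Mor(id,X)=0$ via the tadpole. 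Third, when you say the idempotent condition in the complementary $2$-dimensional algebra ``collapses to a single quadratic equation in one scalar unknown,'' this is true but deserves one more sentence: the complement is a $2$-dimensional unital commutative algebra, hence isomorphic to $\C[z]/(p(z))$ for a quadratic $p$; the nontrivial idempotents correspond to the two roots, and $\xi$ is the square root of the discriminant of $p$. The genuine labour, as you correctly flag, is the multiplication table entries for $\gra{h}\cdot\gra{h}$ and $\gra{h}\cdot\gra{i}$, which require the square relation and then the triangle/bubble reductions; once those structure constants are in hand everything else is routine algebra.
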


\medskip

In our setting, we see that the idempotents are in fact projections, since our basis is self-adjoint and all coefficients are real numbers.  These projections correspond to simple objects in the rigid $C^{*}$-tensor category underlying $(G_{2})_{q}$. 

By Kuperberg's isomorphism, the fusion algebra of the underlying projection category of $(G_{2})_{q}$ is isomorphic to the fusion algebra of the category $\mathscr{C}{q}(\mathfrak{g}_{2})$ for positive $q\ne 1$, which in turn is isomorphic to the complexification of the representation ring $R(G_2)$.  It is well known that for compact, simply connected, simple Lie groups $G$, the representation ring $R(G)$ is isomorphic to the ring of polynomials in the fundamental representations.  For a specific reference for $G_2$ see \cite{FH}, and in general, see \cite{Ad}, Theorem 6.41.   This implies the fusion algebra of $(G_{2})_{q}$  is the  (commutative) complex polynomial algebra in $2$ self-adjoint variables $\mathbb{C}[Z_1, Z_2]$, where $Z_1$ and $Z_2$ correspond to the $14$ and $7$ dimensional fundamental representations of the quantum group $U_{q}(\mathfrak{g}_{2})$ respectively.  (Note that self-adjointness of the variables follows from self-duality of the corresponding representations). 

The fusion graph with respect to $X$ is given by

$$\gre{g2fusion}$$

Here, the vertex at the bottom corresponds to the identity, the next highest vertex corresponds to $X$ itself, etc. 

\begin{subsection}{Property (T) for $(G_{2})_{q}$}

Now we consider the tube algebra $\AC$ of the categories $(G_{2})_{q}$ for some positive $q\ne 1$.  In our analysis all such $q$ will yield the same results, so we supress the dependence of the tube algebra $\AC$ on $q$ for notational convenience.  Recall simple objects in the category correspond to minimal projections in some $Mor(k,k)$.  Let us choose our set of representatives of projections so that it contains the empty diagram $id$, the single string $X$, and the two projections $y_{+}$ and $y_{-}$, representing their equivalence classes.  For $x\in Mor(k,k)$, we let $i:Mor(k,k)\rightarrow Mor(k\otimes id, id\otimes k)$ be the canonical identification. Then define $$\Delta(x):=\Psi(i(x))\in \AC_{0,0}.$$
where $\Psi$ is defined in the discussion of the tube algebra.  In our setting we see that the map in Proposition 2.1 is defined by applying $\Delta$ to a projection.  

Translating the fusion algebra description to our setting, the variable $Z_2$ is represented by $\Delta(X)$, while $Z_{1}$ is represented by the projection $\Delta(y_{+})\in Mor(2,2)$. We see that $$\AC_{0,0}\cong \mathbb{C}[\Delta(y_{+}),\Delta(X)].$$

Going back to our expression for $y_{+}$, we see that $\frac{\delta c+\delta+c}{ \xi}=\frac{(1+q^{2}+q^{4})(1+q^{8})}{q^{4}(1+q^{2})^{2}}\ne 0$.  Thus 

\medskip

$\gra{h}=\frac{q^{4}(1+q^{2})^{2}}{(1+q^{2}+q^{4})(1+q^{8})}\left((y_{+})-\frac{-(\delta+1)c^{2}+ \xi+1}{ 2\xi}
 \gra{id}-\frac{\delta(c^{2}-2c-2)- \xi +c^{2}-2c-1}{ 2\delta \xi} \gra{e} +\frac{\delta(c+2)c+ \xi +c^{2}+1}{ 2\xi} \gra{i}\right).$

\medskip
Then since $\Delta(\gra{i})=\Delta(X)$, we have

 $\Delta(\gra{h})=\frac{q^{4}(1+q^{2})^{2}}{(1+q^{2}+q^{4})(1+q^{8})}\left(\Delta(y_{+})-\frac{-(\delta+1)c^{2}+ \xi+1}{ 2\xi}\Delta(X)^{2}
-\frac{\delta(c^{2}-2c-2)- \xi +c^{2}-2c-1}{ 2 \xi} 1+ \frac{\delta(c+2)c+ \xi +c^{2}+1}{ 2\xi} \Delta(X)\right).$

 We denote $H:=\gra{h}$.  Since our polynomial expression for $\Delta(H)$ is linear in $\Delta(y_{+})$ and the terms with powers of $\Delta(X)$ contain no $\Delta(y_{+})$ terms, we can perform an invertible transformation implementing a change of basis, and write an arbitrary polynomial in $\Delta(y_{+})$ and $ \Delta(X)$ as a polynomial in $\Delta(H)$ and $\Delta(X)$, so that
 
 $$\AC_{0,0}\cong \mathbb{C}[\Delta(H), \Delta(X)].$$

Therefore irreducible representations of $\AC_{0,0}$ are  1-dimensional, and they are defined by assigning numbers to $\Delta(H)$ and $\Delta(X)$.  Let us denote by $\alpha$ the value assigned to $\Delta(H)$ and $t$ the value assigned to $\Delta(X)$ in our $1$-dimensional representation.  Let $\gamma_{\alpha, t}: \AC_{0,0}\rightarrow \C$ denote the 1-dimensional representation viewed as a functional, given by evaluating polynomials in $\C[\Delta(H), \Delta(X)]$ at the point $(\alpha,t)$.

 The key point is that while arbitrary values of $\alpha$ and $t$ determine a representation of $\AC_{0,0}$, not all are annular states (hence admissible representations).   Recall that $\gamma_{\alpha,t}$ is admissible if and only if $\gamma_{\alpha,t}(x^{\#}\cdot x)\ge 0$ for all $x\in \AC_{0,k}$ and for all $k\in \text{Irr}((G_{2})_{q})$.

 As a first restriction, for our representation to be admissible, $t\in \mathbb{R}$ since the object corresponding to a single string is self-dual and our representation must be a $*$-representation.  We also  must have $\alpha\ge 0$, since $\Delta(H)=T^{\#}\cdot T$, where $T\in Mor(X\otimes id, X\otimes X)\subseteq \AC^{X}_{0,X}$ is given by the trivalent vertex $T:=\gra{trivstar}$

\ \ We know also that $|t|\le \delta$ by \cite{PV}, Propsoition 4.2, or \cite{GJ}, Lemma 4.4.  Here $\delta$ is the value of the closed circle defined in terms of $q$ in the description of the skein theory. This restricts the possible one dimensional admissible representations to some subset $Z\subseteq \{(\alpha, t)\subseteq \mathbb{R}^{2}\ :\ \alpha\ge 0,\ t\in [-\delta, \delta]\}$.  

Since the fusion algebra is isomorphic to the polynomial algebra in two self adjoint variables, and irreducible representations correspond to evaluation at points $Z\subseteq \mathbb{R}^{2}$, the weak-$*$ topology on $Z$ as linear functionals on $C^{*}((G_{2})_{q})$ agrees with the (subspace) topology on the plane.  The trivial representation corresponds to the point $(0, \delta)$.  We will show that for positive $q\ne1$, there is a neighborhood of the point $(0,\delta)$ in the rectangle $\mathbb{R}^{+}\times [-\delta, \delta]$ such that the functional $\gamma_{\alpha, t}$ is not an annular state.  

To see this, let $s:=\grb{yminushom}\in Mor(X\otimes id,\ y_{-}\otimes X)$.  We view $s\in\AC^{X}_{0,y_{-}}\subset \AC$.

 For each pair $(\alpha, t)$, define the function $f(\alpha, t):=\gamma_{\alpha, t}(s^{\#}\cdot s)$ . This can be directly computed from the representation of $y_{-}$ in terms of our planar algebra basis, and we obtain

$$f(\alpha, t)=  \delta \frac{-(\delta+1)c^{2}- \xi+1}{- 2\xi}+t^{2}\frac{\delta(c^{2}-2c-2)+ \xi +c^{2}-2c-1}{- 2\delta \xi}-\alpha \frac{\delta(c+2)c- \xi +c^{2}+1}{- 2\xi} +t \frac{\delta c+\delta+c}{- \xi}.$$

By construction, if the functional corresponding to $(\alpha, t)$ is an annular  state, $f(\alpha, t)$ must be non-negative. 

\begin{prop} For all positive $q\ne 1$, $(G_{2})_{q}$ has property $(T)$.
\end{prop}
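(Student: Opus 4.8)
The plan is to invoke the corollary characterizing property (T) for categories with abelian fusion rules: since $(G_{2})_{q}$ is braided, its fusion algebra is commutative and $C^{*}((G_{2})_{q})\cong C(Z)$, so property (T) holds if and only if the trivial representation $1_{(G_{2})_{q}}$, which sits at the point $(0,\delta)\in Z$, is isolated in the spectrum $Z$. As established above, the weak-$*$ topology on $Z$ coincides with the subspace topology from $\mathbb{R}^{2}$, and $Z$ is contained in the region $R:=\{(\alpha,t):\alpha\ge 0,\ t\in[-\delta,\delta]\}$. Moreover every admissible point $(\alpha,t)\in Z$ satisfies $f(\alpha,t)=\gamma_{\alpha,t}(s^{\#}\cdot s)\ge 0$. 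Hence it suffices to exhibit an open neighborhood $U$ of $(0,\delta)$ in $\mathbb{R}^{2}$ with $f(\alpha,t)<0$ for every $(\alpha,t)\in(U\cap R)\setminus\{(0,\delta)\}$; this forces $Z\cap U=\{(0,\delta)\}$, so $1_{(G_{2})_{q}}$ is isolated and property (T) follows.

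To produce such a $U$, I would analyze $f$ as a polynomial near the corner $(0,\delta)$ of $R$. First I would substitute $(\alpha,t)=(0,\delta)$ into the explicit formula for $f$ and check that $f(0,\delta)=0$; collecting the $c^{2}$, $c$, $\xi$, $\delta$, and constant terms, everything cancels, consistent with the trivial representation being admissible and lying on the boundary $\{f=0\}$. Next, using that $f$ is affine in $\alpha$ and quadratic in $t$, I would record the two partial derivatives at the corner: the constant $\alpha$-derivative
$$\partial_{\alpha}f=\frac{\delta(c+2)c-\xi+c^{2}+1}{2\xi},$$
and, after simplification, the $t$-derivative
$$\partial_{t}f(0,\delta)=\frac{(\delta+1)(c^{2}-c-1)+\xi}{-\xi}.$$
The goal of the computation is to show $\partial_{\alpha}f<0$ and $\partial_{t}f(0,\delta)>0$ for every positive $q\ne 1$ (here one uses that $\xi>0$ on this range).

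Granting these two sign conditions, the rest is a short directional argument. Since $(0,\delta)$ is the corner of $R$, any displacement $(\Delta\alpha,\Delta t)$ that stays in $R$ satisfies $\Delta\alpha\ge 0$ and $\Delta t\le 0$. With $\partial_{\alpha}f<0$ and $\partial_{t}f(0,\delta)>0$, the linear part $\partial_{\alpha}f\cdot\Delta\alpha+\partial_{t}f(0,\delta)\cdot\Delta t$ is then $\le 0$, and strictly negative unless $(\Delta\alpha,\Delta t)=(0,0)$. Writing the direction as a unit vector $(\cos\theta,\sin\theta)$, the admissible directions form the compact arc $\theta\in[-\pi/2,0]$, on which the coefficient $\partial_{\alpha}f\cos\theta+\partial_{t}f(0,\delta)\sin\theta$ is continuous and strictly negative, hence bounded above by some $-m<0$; thus the linear part is $\le -m\,\|(\Delta\alpha,\Delta t)\|$. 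As $f$ is a polynomial, its Taylor remainder at $(0,\delta)$ is $O(\|(\Delta\alpha,\Delta t)\|^{2})$, which the linear part dominates on a sufficiently small ball. This yields $f<0$ on $(U\cap R)\setminus\{(0,\delta)\}$ and completes the proof.

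The main obstacle is verifying the two sign inequalities for all positive $q\ne 1$. This amounts to substituting the rational expressions for $\delta$, $c$, and $\xi$ in terms of $q$, clearing denominators, and determining the signs of the resulting Laurent polynomials in $q$ on $(0,\infty)\setminus\{1\}$. I expect the argument to degenerate precisely at $q=1$: there the relevant derivative loses its strict sign, so $f$ no longer has a definite sign near the corner, which is consistent with the amenability of the classical representation category of $G_2$ and the attendant failure of property (T) at $q=1$.
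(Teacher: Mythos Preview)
Your proposal is correct and follows essentially the same route as the paper's own proof: verify $f(0,\delta)=0$, show $\partial_\alpha f<0$ and $\partial_t f(0,\delta)>0$ for positive $q\ne 1$, use compactness of the admissible directions to get a uniform negative bound on the directional derivative, and then let the first-order term beat the remainder. The paper carries out the sign step you flag as the main obstacle by explicit substitution, obtaining the clean forms
\[
\partial_\alpha f\big|_{(0,\delta)}=-\frac{1+q^{2}+2q^{4}+q^{6}+q^{8}}{(q+q^{3})^{2}}<0,\qquad
\partial_t f\big|_{(0,\delta)}=\frac{(q^{2}-1)^{2}(1+q^{2}+q^{4})}{q^{4}}>0\ \ (q\ne 1),
\]
and it also notes that the Taylor remainder here is exact (the only nonvanishing second-order term is $\partial_t^2 f$, and all higher derivatives are zero), which replaces your generic $O(\|\cdot\|^2)$ bound with the explicit quadratic; your argument works either way.
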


\begin{proof}
Since $y_{-}$ is a minimal projection in $Mor(2,2)$ and is not equivalent to $id$ in $(G_2)_{q}$, $f(0, \delta)=0$.  This can also be seen by direct computation.  Let $v:=(x,y)\in \mathbb{R}^{2}$ be a non-zero vector in the fourth quadrant of the plane (including the axes), or in other words $x\ge 0$ and $y\le 0$, but $(x,y)\ne (0,0)$.  We wish to show that $f(x, \delta+y)<0$ for sufficiently small $||v||$.  This will demonstrate that in a neighborhood of $(0,\delta)$ in $\mathbb{R}^{+}\times [-\delta, \delta]$, the function $f(\alpha, t)$ will be strictly negative, hence the representation corresponding to $\gamma_{\alpha, t}$ is not admissible.


We see that for positive $q\ne 1$,

 $$\frac{\partial f}{\partial \alpha}|_{(0,\delta)}=\frac{\delta(c+2)c-\xi +c^{2}+1}{2\xi}=-\frac{1+q^{2}+2q^{4}+q^{6}+q^{8}}{(q+q^{3})^{2}}<0.$$

 This is always strictly negative (for $q\ne 0$). We compute
$$\frac{\partial f}{\partial t}|_{(0,\delta)}=\frac{(\delta+1)(c^{2}-c-1)}{-\xi}-1=\frac{(-1+q^{2})^{2}(1+q^{2}+q^{4})}{q^{4}}>0$$

This expression is strictly positive for all $q\ne 1, q\ne 0$.  Therefore we have that the directional derivative $\frac{\partial f}{\partial v}|_{(0,\delta)}<0$ for $v$ in the prescribed range. 
We remark that for $q=1$, $\frac{\partial f}{\partial t}|_{(0,\delta)}=0$, hence this part of our proof breaks down as expected, since $Rep(G_2)$ is amenable.

Letting $B$ denote the compact set of unit vectors in the fourth quadrant, since $\frac{\partial f}{\partial v}|_{(0,\delta)}$ is a continuous function of $v$, there exists some $M<0$ such that $\frac{\partial f}{\partial v}|_{(0,\delta)}\le M<0$ for $v\in B$.

  Now, it is straightforward to compute

$$\frac{\partial^{2} f}{\partial^{2} t}|_{(0,\delta)}=\frac{2q^{6}(1+q^{2}+q^{4})}{(1+q^{2})^{2}(1+q^{2}+q^{4}+q^{6}+q^{8}+q^{10}+q^{12})}>0,$$

\medskip

 and it is easy to see that all other second order partial derivatives are $0$, and all higher order derivatives with respect to both variables are $0$. Then by Taylor's theorem, we have
$$f(x, \delta+y)=x\frac{\partial f}{\partial \alpha}|_{(0,\delta)}+y\frac{\partial f}{\partial t}|_{(0,\delta)}+\frac{y^{2}}{2}\frac{\partial^{2} f}{\partial^{2} t}|_{(0,\delta)}$$
for arbitrary $v=(x,y)$ in the fourth quadrant.  Let $v^{\prime}=\frac{1}{\|v\| }v$.  Since $y^{2}\le \|v\|^{2}$,  setting $\lambda:=\frac{1}{2} \frac{\partial^{2} f}{\partial^{2} t}|_{(0,\delta)}$ gives

$$f(x, \delta+y)=\|v\|\frac{\partial f}{\partial v^{\prime}}|_{(0,\delta)}+y^{2}\lambda\le \|v\| M+\|v\|^{2}\lambda=\|v\|(M +\|v\| \lambda).$$

If we set $\epsilon=\frac{|M|}{\lambda}$, then since $M<0$, for $0<\|v\|<\epsilon$, we see that $f(x, y+\delta)<0$.  Therefore $(G_{2})_{q}$ has property (T) for positive $q\ne1$.
\end{proof}

\end{subsection}

\end{section}

\begin{section}{Concluding Remarks}
\begin{enumerate}
\item
Our result provides another class of examples of subfactors with property $(T)$ standard invariant.  To see this, we recall that Popa gave an axiomatization of standard invariants of subfactors as standard $\lambda$-lattices \cite{Po2}. These are towers unital inclusions of finite dimensional $C^{*}$- algebras, together with some extra data.  From a rigid $C^{*}$-tensor $\ca$ and object $X$, one can construct a standard $\lambda$-lattice by

$$\begin{array}{cccccccc} 
\C & \subset & End(X) & \subset & End(X\overline{X}) & \subset & End(X\overline{X}X) & \subset \cdot \cdot \cdot \\
  &   &  \cup &  & \cup &  & \cup & \\
 &  & \C & \subset & End(\overline{X}) & \subset & End(\overline{X}X) & \subset \cdot \cdot \cdot \\ \\
\end{array}$$

Then this tower will be the standard invariant of some subfactor $N\subseteq M$ by \cite{Po2}.  The category of $N$-$N$ bimodules will be isomorphic to the subcategory of $\ca$ generated by $X\overline{X}$.  Applying this construction to $(G_{2})_{q}$, since $X$ is self-dual and appears as a sub-object of $X\otimes X$, the even bimodules of this subfactor will be a category equivalent to $\mathscr{C}_{q}(\mathfrak{g}_{2})$ (as opposed to a proper sub-category).  Therefore, as in \cite{PV}, Theorem 8.1, this subfactor will have property (T) standard invariant.

\medskip

\item
We hope that methods similar to those presented here will be useful to deduce property (T) (or lack thereof) for other quantum group categories which have a nice planar algebra description, particularly the $BMW$ planar algebras, which describe the quantum $SO(n)$ and $SP(2n)$ categories.  
\end{enumerate}
\end{section}

\begin{section}{APPENDIX: $*$-structure for $(G_{2})_{q}$}

The point of this section is to show that the $*$-structure we described above for $(G_{2})_{q}$ (reflection of diagrams about a horizontal line) gives a $C^{*}$-trivalent category.  To do this from the purely planar algebra perspective is quite a daunting task, since we would have to explicitly construct all idempotents, show they are self adjoint, and that they have positive trace.  Fortunately for us, due to Kuperberg's isomorphism, $(G_{2})_{q}$ can be realized as the category generated by the fundamental $7$-dimensional representation $X$ of the Hopf $*$-algebra $U_{q}(\mathfrak{g}_{2})$, which carries with it a naturally occurring $*$-structure.  

 To elaborate, we know that quantum groups $U_{q}(\mathfrak{g})$ with positive $q$ have a natural $*$-structure and that every (type 1) finite dimensional representation is unitarizable, which means it is equivalent to a $*$-representation of the Hopf $*$-algebra (see \cite{CP}, Chapter 10).  To obtain a rigid $C^{*}$-tensor category, we restrict our attention to the finite dimensional type 1 unitary representations, and since every representation is unitarizable, this category is monoidally equivalent to the whole category of finite dimensional type 1 representations. 

 Kuperberg's equivalence from $Rep(U_{q}(\mathfrak{g}_{2}))$ to $(G_{2})_{q}$ used the fact that the fundamental $7$-dimensional representation was symmetrically self-dual.  But being symmetrically self-dual depends on the specific choice of duality maps and most importantly on the map implementing the equivalence from $\pi$ to $\pi^{c}$ (the standard dual, defined using the Hopf algebra antipode).  For us to have a $C^{*}$-planar algebra, we need $\pi$ to be \textit{unitarily} symmetrically self dual.  Even  if $\pi$ is unitary, the canonical dual $\pi^{c}$ is not necessarily unitary, although we know it is equivalent to a unitary representation.  In \cite{NT}, they explicitly identify the unitary dual, $\overline{\pi}$ for all representations of $U_{q}(\mathfrak{g})$.  Using this information, we explicitly compute the unitary intertwiner $T\in (\pi, \overline{\pi})$ in the matrix representations of $\pi$ and $\overline{\pi}$, and show that $T$ along with the choices of (mutually adjoint) duality maps implements a unitary symmetric self-duality on $\pi$.  

Furthermore, these duality maps provide our rigid $C^{*}$ tensor category with a pivotal structure, such that duality is compatible with the $*$-structure.  Using Kuperberg's result, our category (forgetting the $*$-structure) must be the trivalent category described by our skein theory. Then, compatibility of the $*$-structure with the duality functor forces the $*$-structure described in Definition 3.2 for $(G_{2})_{q}$, given by horizontal reflection of diagrams.

\medskip

 \ \ We give a brief description of the Drinfeld-Jimbo Hopf $*$-algebra $U_{q}(\mathfrak{g}_{2})$, following \cite{NT}, Definition 2.4.1.  Let $\alpha_{1}, \alpha_{2}$ be the standard choice of simple roots of the $G_{2}$ root system, pictured below:

$$\gre{g2root}$$ 

Let $(a_{ij})=\left( \begin{array}{cc} 2 & -3 \\ -1 & 2\end{array} \right)$ be the Cartan matrix for the $G_{2}$ root system.  Let $d_{1}:=1, d_{2}:=3$, so that $A_{ij}:=d_{i}a_{i,j}=(\alpha_{i},\alpha_{j})$ is the inner product matrix, given by $$A=\left( \begin{array}{cc} 2 & -3 \\ -3 & 6 \end{array}\right)$$

\ \ For $q> 0$, $q\ne 1$, let $q_{i}=q^{d_{i}}$, $i=1,2$.  Then $U_{q}(\mathfrak{g}_{2})$ is defined as the universal unital algebra generated by elements $E_{i}, F_{i} K_{i}, K^{-1}_{i},\ i=1,2$, satisfying the relations

$$K_{i}K^{-1}_{i}=K^{-1}_{i}K_{i}=1,\ K_{i}K_{j}=K_{j}K_{i}$$
$$K_{i}E_{j}K^{-1}_{i}=q^{a_{ij}}_{i} E_{j},\ \ K_{i}F_{j}K^{-1}_{i}=q^{-a_{ij}}_{i} F_{j}$$
$$[E_{i},F_{j}]=\delta_{ij}\frac{K_{i}-K^{-1}_{i}}{q_{i}-q^{-1}_{i}}$$
$$\text{For}\ i\ne j\ \sum^{1-a_{ij}}_{k=0}(-1)^{k}\left[ \begin{array}{c} 1-a_{ij} \\ k\end{array} \right]_{q_{i}} E^{k}_{i}E_{j}E^{1-a_{ij}-k}_{i}=0$$
$$\text{and}\ \sum^{1-a_{ij}}_{k=0}(-1)^{k}\left[ \begin{array}{c} 1-a_{ij} \\ k\end{array} \right]_{q_{i}} F^{k}_{i}F_{j}F^{1-a_{ij}-k}_{i}=0$$

where $\left[ \begin{array}{c} m \\ k \end{array} \right]_{q_{i}}=\frac{[m]_{q_i} !}{[k]_{q_{i}} ! [m-k]_{q_i} !}$, $[m]_{q_i} !=[m]_{q_i} [m-1]_{q_i}\ \dots [1]_{q_{i}}$, and $[n]_{q_i}=\frac{q^{n}_{i}-q^{-n}_{i}}{q_{i}-q^{-1}_{i}}$.

This algebra is a Hopf $*$-algebra with coproduct $\hat{\triangle}_{q}$ define by 

$$\hat{\triangle}_{q}(K_{i})=K_{i}\otimes K_{i},\ \hat{\triangle}_{q}(E_{i})=E_{i}\otimes 1+K_{i}\otimes E_{i},\ \hat{\triangle}_{q}(F_{i})=F_{i}\otimes K^{-1}_{i}+1\otimes F_{i}$$

and with involution given by $K^{*}_{i}=K_{i},\ E^{*}_{i}=F_{i}K_{i},\ F^{*}_{i}=K^{-1}_{i} E_{i}$.

\medskip

 \ \ The counit $\hat{\epsilon}_{q}$ and the antipode $\hat{S}_{q}$ are given by 
$$\hat{\epsilon}_{q}(K_{i})=1,\ \hat{\epsilon}_{q}(E_{i})=\hat{\epsilon}_{q}(F_{i})=0,$$
$$\hat{S}_{q}(K_{i})=K^{-1}_{i},\ \hat{S}_{q}(E_{i})=-K^{-1}_{i}E_{i},\ \hat{S}_{q}(F_{i})=-F_{i}K_{i}$$

A representation is type 1 if it decomposes as the direct sum of weight spaces (see \cite{NT}, Definition 2.4.3).  The category of type 1 finite dimensional representations of this algebra has the structure of a rigid tensor category, with tensor product defined using the comultiplication, and duality the vector space dual with action induced by the antipode.  This is a standard result for Hopf algebras.  A \textit{unitary representation} (or $*$-representation) is simply a $*$-algebra homomorphism $\pi: U_{q}(\mathfrak{g})\rightarrow B(H)$ for some Hilbert space $H$.  We consider here only finite dimensional type 1 unitary representations.  This category is a rigid $C^{*}$-tensor category. We refer the reader to \cite{NT}, Chapter 2.4 for details.  As we have mentioned above, every finite dimensional representation of these Hopf algebras is unitarizable (see \cite{CP}, Chapter 10 ) for positive $q$, so that the category of unitary representations is monoidally equivalent to the category of all finite dimensional representations.   A key point in quantum group theory is that representations for positive $q$ are in one-to-one correspondence with classical representations of the Lie algebras, and have the same dimension (as vector spaces) as the classical representations.

We will give an explicit unitary realization of the fundamental $7$-dimensional representation $X$ of $U_{q}(\mathfrak{g}_{2})$.  Let $v_{0}$ be the highest weight vector, normalized so that $\langle v_{0}, v_{0}\rangle =1$.  Then define $H$ as the Hilbert space with orthonormal basis

$$v_{0},\ v_{1}:=q^{\frac{1}{2}}F_{1}v_{0},\ v_2:=q^{2}F_{2}F_{1}v_{0},\ v_{3}:=q^{3}[2]^{\frac{1}{2}}_{q}F_{1}F_{2}F_{1}v_{0},\ v_{4}:=q^{3}[2]^{-1}_{q}F_{1}F_{1}F_{2}F_{1}v_{0},$$
$$ v_{5}:= q^{\frac{9}{2}}[2]^{-1}F_{2}F_{1}F_{1}F_{2}F_{1}v_{0},\ v_{6}:=q^{5}[2]^{-1}_{q}F_{1}F_{2}F_{1}F_{1}F_{2}F_{1}v_{0}.$$

 The action $\pi$ of $U_{q}(\mathfrak{g}_{2})$ on vectors can be worked out from the commutation relations.  This yields a $*$-representation on which the actions of $E_{1}, E_{2}, F_{1}, F_{2}$ can be worked out explicitly.  For example, we have the matrix representations with respect to the above basis given by

$$\pi(E_{1})=\left( \begin{array}{ccccccc} 
0 & q^{\frac{1}{2}} & 0 & 0 & 0 & 0 & 0 \\
0 & 0 & 0 & 0 & 0 & 0 & 0 \\
0 & 0 & 0 & q[2]^{\frac{1}{2}}_{q} & 0 & 0 & 0 \\
0 & 0 & 0 & 0 & [2]^{\frac{1}{2}}_{q} & 0 & 0 \\
0 & 0 & 0 & 0 & 0 & 0 & 0 \\
0 & 0 & 0 & 0 & 0 & 0 & q^{\frac{1}{2}} \\
0 & 0 & 0 & 0 & 0 & 0 & 0 
\end{array} \right)$$ 

$$\pi(E_{2})=\left( \begin{array}{ccccccc} 
0 & 0 & 0 & 0 & 0 & 0 & 0 \\
0 & 0 & q^{\frac{3}{2}} & 0 & 0 & 0 & 0 \\
0 & 0 & 0 & 0 & 0 & 0 & 0 \\
0 & 0 & 0 & 0 & 0 & 0 & 0 \\
0 & 0 & 0 & 0 & 0 & q^{\frac{3}{2}} & 0 \\
0 & 0 & 0 & 0 & 0 & 0 & 0 \\
0 & 0 & 0 & 0 & 0 & 0 & 0 
\end{array} \right) $$

$$\pi(K_{1})=\text{diag}(q,\ q^{-1},\ q^{2},\ 1,\ q^{-2},\ q,\ q^{-1})\ \text{and}\ \pi(K_{2})=\text{diag}(1,\ q^{3},\ q^{-3},\ 1,\ q^{3},\ q^{-3},\ 1)$$

 \ \ If $f\in B(H,K)$, then define $j(f)\in B(\overline{K}, \overline{H})$ by $j(f)\overline{\xi}=\overline{f^{*}\xi}$ for $\xi\in K$.    Let $\rho$ be the half-sum of positive roots and thus $2\rho$ the sum of positive roots.  Then $2\rho=10\alpha_{1}+6\alpha_{2}$, so we define $K_{2\rho}:=K^{10}_{1} K^{6}_{2}$.  Then we have $\hat{S}^{2}_{q}(x)=K^{-1}_{2\rho} x K_{2\rho}$.  Then we see that $\pi(K_{2\rho})$ is a positive invertible operator (diagonal with respect to our chosen basis), and we define $W:=\pi(K^{-1}_{2\rho})^{\frac{1}{2}}\in B(H)$.  Then $j(W)\in B(\overline{H})$, and we define $\overline{\pi}(\ .\ )=j(W)\pi^{c}(\ .\  )j(W^{-1})$.  This is manifestly equivalent to $\pi^{c}$ but has the advantage of being unitary.  

Alternatively, the dual representation can be given using the unitary antipode $\hat{R}_{q}$, defined by $$\hat{R}_{q}(K_{i})=K^{-1}_{i},\ \hat{R}_{q}(E_{i})=-q_{i}K^{-1}_{i}E_{i},\ \hat{R}_{q}(F_{i})=-q^{-1}_{i}F_{i}K_{i}$$

Then if $\pi:U_{q}(\mathfrak{q}_{2})\rightarrow B(H)$, the unitary dual $\overline{\pi}: U_{q}(\mathfrak{g}_{2})\rightarrow B(\overline{H})$ can be realized as $\overline{\pi}(x)\overline{\xi}=\overline{\pi(\hat{R}_{q}(x))^{*}\xi}$ for $x\in U_{q}(\mathfrak{g} _{2})$.  

(We note the standard dual $\pi^{c}$ is given by the same formula, with the standard antipode $\hat{S}_{q}$ in place of $\hat{R}_{q}$).

 Since $W$ is diagonal in the basis described above, the $i^{th}$ eigenvalue, $W_{i}>0$, is clear.  By inspection, we see that $W_{i}W_{7-i+1}=1$.  Consider the map $T:H\rightarrow \overline{H}$ given by $T(v_{i})=(-1)^{i+1}\overline{v}_{7-i+1}$.  Using the description provided above, it is straightforward to check that $T\in Hom(\pi, \overline{\pi})$.  Furthermore, $T^{*}=T^{-1}$, and is given by the same formula as $T$, with the bars reversed, and we have $T^{*}j(W)=W^{-1}T^{*}$

 Consider the map $\overline{r}\in (\mathbb{C},H\otimes \overline{H})$ given by $\overline{r}(1)=\sum_{i} v_{i}\otimes \overline{v}_{i}$ (this map does not depend on basis).  It is easy to see that $r\in (\epsilon, \pi\otimes \pi^{c})$.  By definition $j(W)\in (\pi^{c},\overline{\pi})$.  Then define $R:=(1\otimes (T^{*} j(W)))\overline{r}=(W\otimes T^{*})r\in (\epsilon, \pi \otimes \pi)$.  We claim that the pair $(R, R^{*})$ provide a standard, symmetrically self-dual solution to the conjugate equations for the object $\pi$.  It is straightforward to check that 

$$ (1_{H}\otimes R^{*}) (R\otimes 1_{H}) =( R^{*}\otimes 1_{H}) (1_{H}\otimes R)=1_H.$$

Now, to see symmetric self duality, we note that since $W$ is self adjoint, $r^{*}\circ(W^{-1}\otimes j(W))=r^{*}$, and $(W^{-1}\otimes j(W))\circ r=r$.  Similarly, if $\overline{r}(1)=\sum_{j}\overline{v}_{j}\otimes v_{j}\in B(\mathbb{C}, \overline{H}\otimes H)$, then since $T$ is a self adjoint unitary, $\overline{r}^{*}\circ (T^{*}\otimes T)=\overline{r}^{*}$ and $(T\otimes T^{*})\circ \overline{r}=\overline{r}$.  Using these relations and the fact that $r, \overline{r}$ themselves solve the duality equations in the category of Hilbert spaces, we have

$$  (1_{H}\otimes 1_{H}\otimes R^{*})\circ (1_{H}\otimes R\otimes 1_{H})\circ  R=R=(R^{*}\otimes 1_{H}\otimes 1_{H})\circ (1_{H}\otimes R\otimes 1_{H}).$$

  We let $X=(\pi, H)$ described above.  We define $\mathcal{C}$ to be the strict (up to Hilbert space associativity) $C^{*}$-tensor category generated by the symmetrically self dual object $X$, with unit object $id:=\hat{\epsilon}_{q}$ (the trivial representation of the quantum group given by the counit), and duality maps compositions of $R$ and $R^{*}$ in the obvious fashion.  Then objects are given by tensor powers $X^{\otimes n}$, where $n\in \mathbb{Z}_{+}$, and morphisms are intertwiners of the corresponding quantum group representations.   It is easy to check that the duality maps we have defined induce a pivotal structure on this category (essentially following from the fact that $(r, \overline{r})$ induce a pivotal structure on finite dimensional Hilbert spaces), hence there is an unambiguously defined dual $\overline{f}\in Mor(X^{n}, X^{m})$ for every $f\in Mor(X^{m}, X^{n})$.  Also we see that our $*$-structure is compatible with duality in the sense that $\overline{f^{*}}=\overline{f}^{*}$, which follows since our solutions to the conjugate equations are mutually adjoint, and from the corresponding property for Hilbert spaces with $(r, \overline{r})$.

\begin{prop} $(G_{2})_{q}$ is a $C^{*}$-trivalent category.
\end{prop}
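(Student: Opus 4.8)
The plan is to transport the $C^*$-structure from $\mathcal{C}$, the genuine representation-theoretic category constructed above, across Kuperberg's isomorphism, and then to verify that it coincides with the diagrammatic $*$ of Definition 3.2. First I would invoke Kuperberg's theorem (\cite{K2}, Theorem 5.1): forgetting the $*$-structure, the trivalent category presented by the skein relations is isomorphic to $\mathcal{C}$, the $C^*$-tensor category generated by the symmetrically self-dual object $X=(\pi,H)$ built above. Since $\mathcal{C}$ is by construction a $C^*$-tensor category — its morphisms are intertwiners of unitary representations of the Hopf $*$-algebra $U_q(\mathfrak{g}_2)$, with $*$ the Hilbert space adjoint — the positivity requirement $Tr(x^*x)\ge 0$ and the $C^*$-identity hold automatically, provided one knows that the diagrammatic trace and the diagrammatic $*$ agree, under this isomorphism, with the categorical trace and the adjoint. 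The diagrammatic trace (closing a diagram) is exactly the pivotal trace $Tr_X$ determined by the standard solutions $(R,R^*)$, so that identification is immediate; the real content is matching the two $*$-operations.

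The key step is therefore to show that the diagrammatic $*$ (horizontal reflection together with complex conjugation) coincides, under Kuperberg's isomorphism, with the Hilbert space adjoint on $\mathcal{C}$. Both are conjugate-linear, involutive, contravariant functors fixing objects, and the planar algebra is generated as such by the cup, the cap, and the single trivalent vertex; hence it suffices to check that the two $*$-operations agree on these generators. On the cup and the cap this is precisely the statement, established above, that the duality maps $R$ and $R^*$ are mutually adjoint, so that horizontal reflection — which interchanges cup and cap — realizes the adjoint. For the trivalent vertex $v$, which spans the one-dimensional space $Mor(id, X^{\otimes 3})$, its reflection and its adjoint both lie in the one-dimensional space $Mor(X^{\otimes 3}, id)$ and so differ by a scalar. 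I would pin this scalar down using the compatibility of $*$ with the pivotal duality, $\overline{f^*}=\overline{f}^*$, proved above, together with the rotational symmetry of the vertex, and normalize $v$ so that reflection equals adjoint; the positivity of the resulting ``theta'' value $Tr(v^*v)$, automatic in the $C^*$-category $\mathcal{C}$, guarantees that the required normalization scalar is a positive real, so that no phase obstruction arises.

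I expect the matching of the trivalent vertex to be the main obstacle: one must verify that Kuperberg's (MPS-)normalized vertex is carried to a vertex whose diagrammatic reflection is exactly its Hilbert space adjoint, rather than merely a scalar multiple of it. Once the two $*$-operations are identified on all three generators, they agree on all of $Mor(k,m)$; the diagrammatic $*$ is then automatically well-defined modulo the skein relations, since the adjoint is well-defined on $\mathcal{C}$ and the skein relations hold there; and $Tr(x^*x)\ge 0$ for every morphism $x$ follows at once from positivity in the $C^*$-category $\mathcal{C}$. This establishes that $(G_2)_q$, equipped with the reflection $*$-structure, is a $C^*$-trivalent category.
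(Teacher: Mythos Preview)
Your proposal is correct and follows essentially the same route as the paper: transport the $C^{*}$-structure across Kuperberg's isomorphism, check that the diagrammatic $*$ agrees with the Hilbert-space adjoint on the generators $\cup$, $\cap$, and the trivalent vertex, using mutual adjointness of $R,R^{*}$ for the first two and the combination of positivity with the compatibility $\overline{f^{*}}=\overline{f}^{*}$ for the vertex. One small sharpening: the paper shows that with the given MPS normalization the scalar is already exactly $1$ (positivity forces it to be real and positive, compatibility with duality forces $\lambda=\lambda^{-1}$), so no re-normalization of the vertex is actually required.
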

\begin{proof}
Neglecting the $*$- structure, $\mathcal{C}$ is precisely the $G_{2}$ algebraic spider defined by Kuperberg, hence must be isomorphic to $(G_{2})_{q}$ ( \cite{K}, 3.2).  We now have a $*$-structure on the morphism spaces, which is positive with respect to the trace (since $\mathcal{C}$ is manifestly a $C^{*}$-category).  Note that by construction $\cup^{*}=R^{*}=\cap$.

  Using the fusion rules,  $Mor (X, X\otimes X)$ is one dimensional, and is spanned in the planar algebra by a rotationally invariant vertex $t\in (X, X\otimes X)$.  We have $t$ normalized so that $\overline{t}\circ t=1_{X}$, where $\overline{t}= (1_{X}\otimes R^{*})\circ (1_{X}\otimes 1_{X}\otimes R^{*}\otimes 1_{X})\circ (1_{X}\otimes t\otimes 1_{X}\otimes 1_{X})\circ (R\otimes 1_{X}\otimes 1_{X})\in (X\otimes X, X)$ is the dual of $t$. We know that $t^{*}=\lambda \overline{t}$ for some $\lambda\in \mathbb{C}$.  This is a $C^{*}$-category so $t^{*}t=\lambda 1_{X}$ must be positive, hence $\lambda>0$.  We also have $\overline{t}^{*}=\lambda^{\prime}t $.  But $t=t^{**}=\lambda \lambda^{\prime}t$, hence $\lambda^{\prime}=\lambda^{-1}$.  Since $*$ is compatible duality as described in the paragraph before this proposition, we have that $\overline{t}^{*}=\overline{t^{*}}$, hence $\lambda^{-1}t=\lambda t$, but since $\lambda>0$, it must be that $\lambda=1$.

The $*$ of $t$ and $\cap$ determine the $*$ on an arbitrary diagram in $Mor(k,m)$, and it is simply the reflection about a horizontal line.  Therefore $(G_{2})_{q}$ is a $C^{*}$-trivalent category.
\end{proof}

\end{section}

\end{document}